\number\time{}
\number\time{}
\newcommand\cP{{\mathcal P}}
\newcommand\cB{{\mathcal B}}
\newcommand\cA{{\mathcal A}}
\newcommand\cC{{\mathcal C}}
\newcommand\eps{\varepsilon}
\newcommand\Prb{\mathbb{P}}
\newcommand\N{\mathbb{N}}
\newcommand{\whp}{whp\xspace}
\title{Sharpness in the $k$-nearest neighbours random geometric graph model}
\author{Victor Falgas--Ravry\footnote{School of Mathematical Sciences, Queen Mary,
University of London, London E1 4NS, England} \footnote{\tt v.falgas-ravry@qmul.ac.uk}\and Mark Walters$^*$\footnote{\tt m.walters@qmul.ac.uk}}
\begin{document}

\newtheorem{theorem}{Theorem}
\newtheorem{lemma}[theorem]{Lemma}
\newtheorem{corollary}[theorem]{Corollary}
\newtheorem*{quotedresult}{Lemma}
\newtheorem*{conjecture}{Conjecture}

\maketitle


\begin{abstract}
  Let $S_{n,k}$ denote the random geometric graph obtained by placing
  points in a square box of area $n$ according to a Poisson process of
  intensity $1$ and joining each point to its $k$ nearest neighbours.
  In~\cite{MR2135151} Balister, Bollob\'as, Sarkar and Walters conjectured
  that for every  $0< \varepsilon <1$ and all $n$
  sufficiently large there exists $C=C(\eps)$ such that if
 \[\mathbb{P}(S_{n,k} \mbox{ connected}) \geq \varepsilon\] 
then
 \[\mathbb{P}(S_{n,k+C} \mbox{ connected}) > 1 -\varepsilon .\]
In this paper we prove this conjecture.

As a corollary we prove that there is a constant $C'$ such that whenever
$k(n)$ is a sequence of integers with
\[
\mathbb{P}(S_{n,k(n)} \textrm{ connected})\rightarrow 1 \textrm{ as n
} \rightarrow \infty,
\]
then for any integer sequence $s(n)$ with $s(n)=o(\log n)$,
\[
\mathbb{P}(S_{n,k(n)+\lfloor C's\log \log n\rfloor} \textrm{
  $s$-connected})\rightarrow 1 \textrm{ as n } \rightarrow \infty.
\]
This proves another conjecture of Balister, Bollob\'as, Sarkar and Walters~\cite{MR2479805}.

\end{abstract}

\section*{Introduction}
Let $S_n$ be the square $[0,\sqrt{n}]\times[0,\sqrt{n}] \subset \mathbb{R}^2$ and let $k$ be an integer. Place points in $S_n$ according to a Poisson process of intensity $1$ and put an undirected edge between each point and its $k$ nearest neighbours. Let $S_{n,k}$ be the resulting random geometric graph.

Several authors (see below) have considered the following question:
for which $k$ is $S_{n,k}$ connected?  Of course, it is always
possible for $S_{n,k}$ to fail to be connected, no matter how large
$k$ is; the best we can hope for is that $S_{n,k}$ is connected
`asymptotically'. Formally, given a function $k\colon \N\to \N$ and a
property $\mathcal{Q}$ of geometric graphs, we say that $S_{n,k(n)}$
has a property $\mathcal{Q}$ \emph{with high probability} (abbreviated
to \whp) if
\[\lim_{n \rightarrow \infty} \mathbb{P}(S_{n, k(n)} \textrm{\ has property $\mathcal{Q}$}) =1.\]
We remark that what this is saying is that the probability a random point set gives rise to a graph with property $\mathcal{Q}$ tends to one.

Elementary arguments indicate that there exist constants $c_l$ and $c_u$ such that for every $c< c_l$, $S_{n, \lceil c \log n \rceil}$ is \whp not connected while for every $c>c_u$ $S_{n, \lfloor c \log n   \rfloor}$ is \whp connected. Using a result of Penrose~\cite{MR1442317}, Xue and Kumar~\cite{xue-kumar} showed that $c_u \leq 5.1774$. A bound of $c_u\leq 2\log \left(\frac{4\pi/3+\sqrt{3}/2}{\pi+3\sqrt{3}/4}\right)\approx 3.8597$ can also be read out of earlier work by Gonz\'ales-Barrios and Quiroz~\cite{MR1965368}.

These results were significantly improved by Balister, Bollob\'as, Sarkar and Walters in~\cite{MR2135151, MR2514943} where they established the existence of a critical constant $c_*: 0.3043 < c_* < 1/\log 7 \approx 0.5139$ such that for any
$c<c_*$ $S_{n,\lceil c \log \rceil}$ is \whp not connected and for any $c>c_*$ $S_{n, \lfloor c \log n \rfloor}$ is \whp connected.
They also made the following conjecture about the sharpness of the transition.

\begin{conjecture}[Conjecture 3 of~\cite{MR2135151}] \label{conjoc}
For any $0< \varepsilon <1$, there exists an integer constant $C(\varepsilon)$ such that for all $n$ sufficiently large,
if 
\[ \mathbb{P}(S_{n,k} \textrm{ is connected})\geq \varepsilon \]
then
\[\mathbb{P}( S_{n,k+C(\varepsilon)} \textrm{ is connected})> 1-\varepsilon.\]
\end{conjecture}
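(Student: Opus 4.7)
My plan is to prove the conjecture via the standard sharpness template: identify a dominant ``local'' obstruction to connectivity, show its occurrences are approximately Poisson-distributed, and then demonstrate that the expected number of obstructions decays by a multiplicative factor bounded away from $1$ each time $k$ is increased by $1$. Choosing $C$ large enough will make this factor compound to give the required $O(1)$-window sharpness.

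First I would set up the notion of a \emph{defect}: a small (bounded-size) subset $C$ of points of the Poisson process such that the union over $v \in C$ of the open ball around $v$ containing $v$'s $k$ nearest neighbours contains no Poisson points outside $C$. Such a $C$ forms a union of connected components of $S_{n,k}$, of diameter on the scale $\sqrt{k}$. Near the edges and corners of $S_n$ one must allow modified versions (with the forbidden empty region cut off by the boundary), but the setup is essentially the same. Using the machinery of Balister, Bollob\'as, Sarkar and Walters~\cite{MR2135151, MR2514943}, which in the regime $k = \Theta(\log n)$ controls the structure of the disconnected components, I would reduce $\{S_{n,k}\text{ disconnected}\}$ to $\{N_k \geq 1\}$, where $N_k$ counts defects. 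Partitioning $S_n$ into cells of side $\Theta(\sqrt{k})$ and using that defects in well-separated cells are independent events depending on disjoint Poisson sub-processes, a Stein--Chen or Janson-type bound gives $N_k$ approximately Poisson with mean $\mu_k$, so $\mathbb{P}(S_{n,k}\text{ connected}) = e^{-\mu_k} + o(1)$.

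The key technical step, which I expect to be the main obstacle, is establishing a uniform decay $\mu_{k+1} \leq \alpha\, \mu_k$ for some $\alpha < 1$ independent of $n$ in the relevant range of $k$. For each defect type, one must show that increasing $k$ by $1$ forces each point of the candidate defect $C$ to accept an additional neighbour, thereby expanding the forbidden empty region by a set whose expected area is bounded below by a positive constant. Since the local point density is the constant $1$, this additional empty annulus contributes a factor of $e^{-\Theta(1)}$ to the probability of the defect, uniformly over configurations and over the position in $S_n$. Making this uniformity precise, in particular handling edge and corner defects whose forbidden regions are lower-dimensional in a sense, is where the careful geometric estimates go. One has to be especially careful that the geometry of the forbidden region does not degenerate as $C$ ranges over all possible defect shapes.

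Granted this decay, the conclusion is immediate: if $\mathbb{P}(S_{n,k}\text{ connected}) \geq \varepsilon$ then $\mu_k \leq \log(1/\varepsilon) + o(1)$, so $\mu_{k+C} \leq \alpha^C \log(1/\varepsilon) + o(1)$, and choosing $C = C(\varepsilon)$ with $\alpha^C \log(1/\varepsilon) < \log(1/(1-\varepsilon))$ yields $\mathbb{P}(S_{n,k+C}\text{ connected}) > 1-\varepsilon$ for all sufficiently large $n$. The same framework, with $C$ allowed to absorb an extra $\log\log n$ factor coming from the increase in forbidden area needed to kill all $s$-cuts, should then deliver the $s$-connectivity corollary.
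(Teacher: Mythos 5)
Your reduction from the global event $\{S_{n,k}\text{ disconnected}\}$ to a first-moment/Poisson-approximation count of bounded-scale local obstructions is the right framework, and it is essentially the paper's Section~1 (and prior work of Balister--Bollob\'as--Sarkar--Walters); the paper uses independent and dominating covers of the square by copies of a box $U_n$ of side $M\sqrt{\log n}$ rather than Stein--Chen, but either route gives the equivalence $\Prb(S_{n,k}\text{ connected})\ge\varepsilon \iff \Prb(A_k)\lesssim \log(1/\varepsilon)\log n/n$.

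The genuine gap is in your key step, the claimed uniform decay $\mu_{k+1}\le\alpha\mu_k$. You propose to obtain this by arguing that raising $k$ to $k+1$ forces each point of the defect to accept one more neighbour, ``expanding the forbidden empty region by a set whose expected area is bounded below by a positive constant.'' This does not work, for two reasons. First, the area increase is not bounded below uniformly over defect configurations: a defect is allowed to contain many more than $k+1$ points, and if those points are tightly clustered then the $(k+1)$-st nearest-neighbour radius of a typical point is only marginally larger than the $k$-th, so the extra annulus can have arbitrarily small area. Precisely these dense configurations are the ones that the argument has to beat, and your coupling gives them essentially no penalty. Second, even if you only had control of the \emph{expected} area increase, that would not suffice: the probability of a given forbidden region of area $A$ is $e^{-A}$, and by Jensen $\mathbb{E}[e^{-\Delta A}]\ge e^{-\mathbb{E}[\Delta A]}$, so a lower bound on the mean increase points in the wrong direction; the contribution is dominated by configurations whose increase is small. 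This is exactly why the conjecture resisted the naive approach.

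The paper's resolution is genuinely different. Instead of trying to show the forbidden region grows, it proves (Lemma~\ref{geomlem}) a geometric statement that any small component witnessing $A_k$ must, up to an $O(n^{-1.1})$ error, sit adjacent to a tile $Q$ of area $\log n/N^2$ containing more than $(1+\eta)$ times its expected number of points; the proof goes via the hexagonal hull of the component and shows that otherwise the component is surrounded by $\ge 1.1\log n$ worth of empty tiles. Then (Lemma~\ref{remove}, Corollary~\ref{corolb}) one compares $A_{k+L}$ with $A_k$ not by growing the empty region but by \emph{removing} $L$ points from the overfull tile $Q$: deletion only shrinks the $k$-nearest-neighbour graph, so the small component survives, and since the tile is already above Poisson mean, removing $L$ points multiplies the Poisson weight by at least $(1+\eta/2)^{L}$. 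This produces the required multiplicative gain $\Prb(A_{k+L})<e^{-1}\Prb(A_k)$ uniformly, which is what your argument lacks. You would need to supply something playing this role --- a structural reason why the defect is forced to be locally overdense, together with a point-removal coupling --- before the iteration at the end of your proposal is valid.
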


The main result of this paper is the following theorem which proves the conjecture for an explicit function $C(\eps)$.

\begin{theorem} \label{maintheo}
There exist absolute constants $C>0$ and $\gamma>0$ such that for every $0< \varepsilon < 1$ and all $n> \varepsilon^{-\gamma}$, if 
 \[\mathbb{P}(S_{n,k} \textrm{ is connected}) \geq \varepsilon\] 
then 
 \[\mathbb{P}(S_{n,k+ \lfloor C\log(1/\varepsilon)\rfloor} \textrm{ is connected}) > 1 -\varepsilon .\] 
\end{theorem}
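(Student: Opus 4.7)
The strategy is to establish an exponential decay of the disconnection probability as $k$ increases, and iterate. Specifically, we aim for a single-step decay of the form
\[
\mathbb{P}(S_{n,k+1} \text{ is disconnected}) \leq \delta\cdot\mathbb{P}(S_{n,k} \text{ is disconnected}) + n^{-\omega(1)},
\]
for some absolute constant $\delta\in(0,1)$. Iterating this bound $c = \lceil C\log(1/\varepsilon)\rceil$ times with $C$ slightly greater than $1/\log(1/\delta)$, the hypothesis $\mathbb{P}(S_{n,k}\text{ disconnected})\leq 1-\varepsilon$ propagates to $\mathbb{P}(S_{n,k+c}\text{ disconnected}) < \varepsilon$ once $n$ is larger than a suitable polynomial in $1/\varepsilon$, which is exactly Theorem~\ref{maintheo}.

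The first step reduces disconnection to small, local witnesses. Building on the structural analysis of Balister, Bollob\'as, Sarkar and Walters~\cite{MR2135151, MR2514943}, we show that with probability at least $1 - n^{-\omega(1)}$, any disconnected component $C$ of $S_{n,k}$ has bounded diameter (in units of a typical $k$-nearest-neighbour distance) and is witnessed by an empty separating region around $\bigcup_{x\in C} B(x, d_k(x))$, where $d_k(x)$ denotes the distance from $x$ to its $k$-th nearest neighbour. Components of larger diameter, or of otherwise atypical geometry, carry negligible probability by isoperimetric estimates and contribute only to the error term.

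The key step is to prove the multiplicative decay per local witness. For a fixed geometric window containing a candidate component, we compare the probability that the window witnesses disconnection in $S_{n,k}$ with the corresponding probability in $S_{n,k+1}$. The point is that incrementing $k$ forces at least one of the radii $d_k(x)$ to grow, which strictly enlarges the required empty separating region by at least a positive constant area. Because the point set is Poisson, the void probability for this enlarged region is a factor $e^{-\Omega(1)}$ smaller. A union bound over a net of $O(n)$ possible windows yields the per-step decay displayed above; combined with the iteration, this gives the theorem.

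The principal obstacle is obtaining the per-window decay with a \emph{uniform} absolute constant $\delta$, independent of $k$, $n$ and the window location. This is most delicate for windows near the boundary and corners of the square $S_n$, where the area available for a separating empty annulus is reduced and the extremal witness geometry differs from that in the bulk; one must verify that a (possibly worse, but still absolute) multiplicative decay persists there. A secondary bookkeeping issue is handling medium-sized components---those of diameter between a constant and $\sqrt{k}$---which require interpolating between the genuinely local and the genuinely large regimes, and checking that the multiplicative decay survives this interpolation.
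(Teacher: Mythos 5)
Your overall strategy---localise, prove a per-step multiplicative decay, iterate---matches the skeleton of the paper's argument (which reduces to local events $A_k$ in a box $U_n$ of side $M\sqrt{\log n}$, establishes $\mathbb{P}(A_{k+L}) < e^{-1}\mathbb{P}(A_k)$ for a constant $L$, and iterates). But your key step is not the paper's key step, and as stated it has a genuine gap.

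The gap is in the sentence ``incrementing $k$ forces at least one of the radii $d_k(x)$ to grow, which strictly enlarges the required empty separating region by at least a positive constant area.'' This is false as a pointwise statement about configurations. The radius $d_{k+1}(x)-d_k(x)$ can be arbitrarily small (and the corresponding annulus of new empty area arbitrarily thin) when the component $C$ contains many points at nearly the same distance from $x$, i.e.\ precisely when $C$ is densely packed. So there is no uniform lower bound on the area growth of the empty region, and hence no $e^{-\Omega(1)}$ void-probability factor from this argument alone. Moreover, the per-step comparison you want to make---between the event that a fixed window witnesses disconnection for $k$ and the same event for $k+1$---does not come with any natural coupling or injection that ``adds an empty region''; the witness geometry (the shape and radii of the separating region) changes with the configuration, so one cannot simply multiply by a void probability.

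The paper resolves exactly this difficulty by working in the complementary regime. Its geometric lemma (Lemma~\ref{geomlem}) shows that \emph{if no tile of $U_n$ is overpopulated}, then an $A_k$-configuration must carry an empty region of area at least $1.1\log n$, contributing only $O(n^{-1.1})$. Consequently one may condition on the existence of a dense tile $Q$ (receiving at least $(1+\eta)\log n/N^2$ points). The multiplicative decay then comes not from a growing empty region but from the \emph{cost of overpopulating $Q$}: deleting $L$ of $Q$'s points from an $A_{k+L,Q}$-configuration leaves an $A_{k,Q}'$-configuration (Lemma~\ref{remove}), and the Poisson measure of configurations with $L$ extra points in an already-dense tile is smaller by a factor $(1+\eta/2)^{-L}$ (Corollary~\ref{corolb}). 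That factor, summed over the constantly many tiles, gives $\mathbb{P}(A_{k+L}) < e^{-1}\mathbb{P}(A_k)$ for a suitable constant $L$ (not $L=1$). Your proposal is missing this ``dense tile is expensive'' mechanism, which is the heart of the proof; without it, the densely-packed configurations that defeat your annulus-growth argument are unaccounted for. A further, secondary point: the paper's per-step inequality only holds under the side condition $\mathbb{P}(A_k)\ge n^{-1.05}$, and a nontrivial part of the proof of Theorem~\ref{maintheo} is the case analysis (via Corollary~\ref{appimp} and Lemma~\ref{prep}) handling what happens when that side condition fails or $k$ exits the window $[0.3\log n,0.6\log n]$; your sketch folds this into an unexamined ``$+n^{-\omega(1)}$'' error.
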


In \cite{MR2479805} Balister, Bollob\'as, Sarker and Walters proved a weaker variant of their conjecture which they used to show that if $k=k(n)$ is such that $S_{n,k(n)}$ is connected \whp then for any $s=o(\log n)$ the graphs $S_{n,k'(n)}$ where $k'(n)=k(n)+ \lfloor 6 \sqrt{(s-1) \log n} \rfloor$ are \whp $s$-connected in a technical sense of `on average'. As an immediate corollary to Theorem \ref{maintheo}, we may remove the somewhat complicated hypothesis that they needed in the statement of their result: Theorem 10 of~\cite{MR2479805} (admittedly with a weaker constant). Moreover, in the final section we strengthen this substantially proving the following theorem.

\begin{theorem}\label{s-connected}
Whenever $k(n)$ is an integer sequence such that $S_{n,k(n)}$ is \whp
connected and $s(n)$ is an integer sequence with $s(n)=o(\log n)$, then
$S_{n, k(n)+ \lfloor 2C s\log \log n \rfloor}$ is \whp $s$-connected.
\end{theorem}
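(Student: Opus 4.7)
The value $X = \lfloor 2Cs \log\log n\rfloor$ is exactly $\lfloor C\log(1/\varepsilon)\rfloor$ for $\varepsilon = (\log n)^{-2s}$, and this is what determines my proposed plan. The first step is to apply Theorem~\ref{maintheo} with this $\varepsilon$: since $\mathbb{P}(S_{n,k(n)}\text{ connected})\to 1$ and hence exceeds $(\log n)^{-2s}$ for $n$ large, Theorem~\ref{maintheo} yields
\[\mathbb{P}\bigl(S_{n,k(n)+X}\text{ connected}\bigr) > 1 - (\log n)^{-2s}.\]

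Next, I would boost this to $s$-connectedness via a union bound over potential cut sets. The key subgraph relation is that for $V \subseteq P$ with $|V|=s-1$, the $(k(n)+X-s+1)$-nearest-neighbour graph on $P\setminus V$ is contained in $S_{n,k(n)+X}-V$, since removing $s-1$ points shifts each vertex's neighbour ranks by at most $s-1$. Hence $S_{n,k(n)+X}-V$ being disconnected implies the reduced nearest-neighbour graph on $P\setminus V$ is disconnected, and by the Slivnyak--Mecke formula one can express the expected number of such cut sets $V$ as
\[\frac{n^{s-1}}{(s-1)!}\cdot\mathbb{P}\bigl(S_{n,k(n)+X-s+1}\text{ disconnected}\bigr),\]
to which the quantitative bound from Theorem~\ref{maintheo} applies.

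The main obstacle is that this naive union bound is far too weak: $n^{s-1}(\log n)^{-2s}$ does not tend to zero, and no choice of $\varepsilon$ that gives $X = O(s\log\log n)$ will fix the polynomial factor. The hard step will therefore be a structural lemma showing that cut sets of size at most $s-1$ are geometrically localised: only $(\log n)^{O(s)}$ essentially different candidate $V$ need be considered rather than all $\binom{n}{s-1}$ subsets. The geometric intuition is that cuts in the nearest-neighbour graph are forced by rare local defects of the Poisson process (sparse pockets, corner clusters, narrow bottlenecks), of which there are only polylogarithmically many essentially distinct candidates; I expect this to be provable by adapting the small-component analysis of~\cite{MR2135151,MR2479805}. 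With such a lemma in place, the final union bound reads $(\log n)^{O(s)}\cdot(\log n)^{-2s}=(\log n)^{-\Omega(s)}=o(1)$, establishing $s$-connectedness whp.
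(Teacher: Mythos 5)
Your first step is correct and matches the paper: apply Theorem~\ref{maintheo} with $\eps$ polylogarithmically small (the paper uses $\eps=(c_6\log n)^{-s}$ rather than $(\log n)^{-2s}$, but this is a cosmetic difference absorbed into the constant $C$), and you correctly diagnose that a naive union bound over $\binom{n}{s-1}$ potential cut sets produces an unacceptable $n^{s-1}$ factor. However, the load-bearing step --- replacing $n^{s-1}$ by $(\log n)^{O(s)}$ --- is exactly where your proposal stops, with ``I expect this to be provable by adapting the small-component analysis.'' That is the entire content of the theorem, so as written there is a genuine gap.

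More importantly, the paper does \emph{not} prove a lemma enumerating candidate cut sets, which is what your plan calls for and which would face real combinatorial difficulties (two ``essentially different'' cut sets is not an obviously well-defined notion). Instead, Lemma~\ref{l:main} of the paper goes one vertex at a time. It shows
\[
\Prb(S_{n,k}\text{ not $s$-connected})\le c_6 (\log n)\,
\Prb(S_{n,k-1}\text{ not $(s-1)$-connected})+O(n^{-3}),
\]
and iterates $s-1$ times. The mechanism is a measure-preserving coupling on the conditioned space $\cP_m$: one considers the event that $G_k(\cP)$ is not $s$-connected but $G_{k-1}(\cP)$ is $(s-1)$-connected, observes that there must be a single deletable point $X_i$ whose removal destroys $(s-1)$-connectivity of $G_{k-1}$, and then --- using the a priori bound that all but one component of the separated graph is small --- argues that this removed point is forced to lie in a region of measure $O(\log n)$ determined by the remaining pointset. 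That localization of the single removed vertex, combined with the factor $m=O(n)$ from choosing which point to remove, gives the $O(\log n)$ (rather than $O(n)$) cost per step. Your geometric intuition (cuts live near rare local defects) is aligned with this, but the paper's execution via one-at-a-time deletion and the measure-preserving maps $\phi_i$ is both cleaner and substantively different from counting cut sets, and it is the part your proposal would still need to supply. You also omit the boundary reductions the paper handles: restricting to $s\leq \frac{\log n}{2\gamma\log\log n}$ (using Theorem~2 of~\cite{MR2479805} for the remaining range) so that $n>\eps^{-\gamma}$ is actually satisfied, and the case $k+\lfloor 2Cs\log\log n\rfloor\geq 0.6\log n$.
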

\noindent%
This proves the main conjecture in~\cite{MR2479805}.
\goodbreak

Before we describe the structure of our paper, we briefly contrast the $k$ nearest neighbours model with another classical random geometric graph model introduced by Gilbert~\cite{MR0132566}. As before, let $S_n$ be the square $[0,\sqrt{n}]\times [0,\sqrt{n}]\subset \mathbb{R}^2$. Let $r$ be a real number. Again, place points in $S_n$ according to a Poisson process of intensity $1$ but this time put an undirected edge between any pair of points which lie at a distance of at most $r$ from one another. We denote by $G_{n,r}$ the resulting random geometric graph model. $G_{n,r}$ is often known as the Gilbert disc model. Penrose~\cite{MR1442317,MR1704341,MR1986198} proved very precise results on the connectivity of $G_{n,r}$.  In particular he showed that isolated vertices are the main obstacle to connectivity in the sense that \whp
\[\inf\{r\geq 0: \ G_{n,r} \textrm{ is connected}\}=\inf\{r\geq 0: \ G_{n,r} \textrm{ has no isolated vertices}\}. \]

The situation is quite different for the $k$ nearest neighbours model,
which has no isolated vertices nor any immediately apparent analogous
family of geometric obstructions to connectivity --- indeed the
the value of the critical constant $c_*$ is not known (although it may
well be the lower bound of $0.3043\ldots$ proved in~\cite{MR2135151}).
 
One motivation for the study of $S_{n,k}$ (and the Gilbert disc model)
comes from the theory of ad-hoc wireless networks. We imagine that we
have various radio transmitters (nodes) that wish to communicate using
multiple hops. The transmitters could have fixed range which naturally
corresponds to the Gilbert disc model, or they could adjust their
power so that each node has some fixed number of neighbours which is
exactly the $k$-nearest neighbour model. In this context
Theorem~\ref{s-connected} is a result about the fault tolerance of
such a network: it says that we can have a fault tolerant network for
very little additional cost over the minimum needed for communication.
\subsection*{Outline of Paper}

In the first section, we adapt techniques first introduced in~\cite{MR2514943} to relate the global property of connectivity to certain families of local events: these will be events determined by  the Poisson process inside a square of area of order $\log n$. 

In the second section we prove a geometric lemma which is crucial to our argument, establishing that `small' connected components in $S_{n,k}$ have a region of `high point density'.

In the third section we show that removing points from such a dense region results in a much more likely configuration which still gives rise to a small connected component in the $k'$-nearest neighbour graph for some $k'$ a little smaller than $k$.  In other words the graph $S_{n,k'}$ is much more likely to be disconnected than $S_{n,k}$ which is exactly Theorem~\ref{maintheo}.

In the final section we prove Theorem~\ref{s-connected}.
\section{Local obstacles to connectivity}

Following~\cite{MR2514943}, we shall relate the global connectivity of $S_{n,k}$ to certain families of local events. Let $M$ be an integer constant which we shall specify later on. Let $U_n$ be the square
\[U_n= \left[\frac{-M\sqrt{\log n}}{2},\frac{M\sqrt{\log n}}{2}\right]\times \left[\frac{-M\sqrt{\log n}}{2},\frac{M\sqrt{\log n}}{2}\right] \subset \mathbb{R}^2.\] 
We shall refer to the subsquare $\frac{1}{2}U_n$ as the \emph{central subsquare} of $U_n$. Place points in $U_n$ according to a Poisson process of intensity $1$, and put an undirected edge between any point and the $k$ points nearest to it to obtain the random geometric graph $U_{n,k}$.

We define $A_k$ to be the event that $U_{n,k}$ has a connected component wholly contained inside the central subsquare $\frac{1}{2}U_n$. First, note that our $A_k$ event is slightly different from the family of events defined in~\cite{MR2514943}: there the size of the box corresponding to $U_n$ varied with $k$ rather than $\log n$. One of the advantages of our definition of $U_n$ is that the $A_k$-events are nested: if $k \leq k'$, then $A_{k'}\subseteq A_k$.  We shall cover most of $S_n$ with copies of $U_n$ and show (approximately) that $S_{n,k}$ is disconnected if and only if the event $A_k$ occurs in one of these copies.

For this argument to work we need to ensure that \whp  $S_{n,k}$ contains no `long' edges (relative to $M\sqrt{\log n}$) and only one connected component of `large' diameter. The following result is exactly what we want.

\begin{lemma}[Lemma~1 of \cite{MR2514943}]\label{mcon} 
For any fixed $\alpha_1, \alpha_2$ with $0<\alpha_1<\alpha_2$ and any $\beta>0$, there exists $c=c(\alpha_1,\alpha_2,\beta)>0$, depending only on $\alpha_1,\alpha_2$ and $\beta$, such that for any $k$ with $\alpha_1\log n\leq k \leq \alpha_2 \log n$, the probability that $S_{n,k}$ contains two components each of diameter at least $c\sqrt{\log n}$ or any edge of length at least $c\sqrt{\log n}$ is $O(n^{-\beta})$.
\end{lemma}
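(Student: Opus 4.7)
The plan is to control the two failure modes---long edges and multiple large components---separately by discretising $S_n$ and applying Poisson concentration inequalities, then choosing the constant $c$ at the end to dominate the various implicit constants that appear.

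For the no-long-edges part, I would place a lattice of $O(n)$ probe points in $S_n$ at unit spacing. For each probe $x$, a standard Chernoff bound shows that the disc $B(x,(c/2)\sqrt{\log n})$ contains at least $\alpha_2\log n + 1$ Poisson points except on an event of probability at most $n^{-\beta-2}$, provided $c$ is large enough in terms of $\alpha_2$ and $\beta$. A union bound then gives that whp every such disc is densely populated. But if $S_{n,k}$ contained an edge $uv$ of length $\ell \geq c\sqrt{\log n}$, then $B(u,\ell)$ would contain at most $k \leq \alpha_2\log n$ Poisson points other than $u$; applying the density bound at the probe $x$ nearest the midpoint of $uv$ then contradicts this, since $B(x,(c/2)\sqrt{\log n}) \subseteq B(u,\ell)$ once $c$ is large.

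For the at-most-one-large-component part I would use a renormalisation argument on a tiling of $S_n$ by super-tiles of side $L\sqrt{\log n}$, for a large constant $L$. Whp each super-tile contains $\Theta(L^2\log n)$ Poisson points; combined with the short-edge bound from Part~1 and a Chernoff estimate for the Poisson count inside every small sub-disc, one extracts from each super-tile a canonical \emph{local giant}: a connected subgraph of $S_{n,k}$ containing all but a negligible fraction of the Poisson points in the super-tile and of diameter comparable to $L\sqrt{\log n}$. The local giants of adjacent super-tiles are whp joined by an edge of $S_{n,k}$ (their points within the short-edge radius of the shared boundary are forced to be among each other's $k$-nearest neighbours by density), so they merge into a single connected \emph{backbone} spanning all of $S_n$. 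Finally, any component of $S_{n,k}$ of diameter $\geq c\sqrt{\log n}$ must meet the interior of some super-tile in a region forced by the short-edge bound to lie in the local giant, and hence must be part of the backbone. This rules out the existence of two distinct components of diameter $\geq c\sqrt{\log n}$.

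The main obstacle is the local-giant claim: showing that within each super-tile the Poisson sample (with the $k$-nearest-neighbour edges for $k = \Theta(\log n)$) admits a distinguished connected subcomponent containing almost all its points, except on an event of probability at most $n^{-\beta-2}$ uniformly in $\alpha_1\log n \leq k \leq \alpha_2\log n$. Establishing this requires a careful choice of $L = L(c,\alpha_1,\alpha_2,\beta)$ together with a combinatorial/geometric argument showing that the local Poisson sample is well connected at the $k$-nearest-neighbours scale, in a quantitative sense that survives the union bound over $O(n/(L^2\log n))$ super-tiles. Once this local structure is under control, the rest of the proof is a clean geometric stitching.
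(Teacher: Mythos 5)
This lemma is quoted verbatim from Lemma~1 of Balister, Bollob\'as, Sarkar and Walters~\cite{MR2514943}; the present paper does not prove it, so there is no internal argument to compare against. Evaluating your proposal on its own terms: the first part (ruling out edges of length $\geq c\sqrt{\log n}$) is sound in outline, and is indeed the standard Chernoff-plus-union-bound argument, though you will need a little slack in the constants --- a probe within $O(1)$ of the midpoint of a length-$c\sqrt{\log n}$ edge does \emph{not} quite give $B(x,(c/2)\sqrt{\log n})\subseteq B(u,\ell)$, so use radius $(c/3)\sqrt{\log n}$ or similar; and near the corners of $S_n$ you must work with quarter-discs, exactly as in the proof of Lemma~\ref{no long edges in U}.

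The second part has a genuine gap, which you yourself flag: the ``local giant'' claim is not established, and it is essentially the whole content of the hard direction of the lemma. Showing that, in a box of side $L\sqrt{\log n}$, the $k$-nearest-neighbour graph on a Poisson process has a distinguished connected subgraph of large diameter containing almost all points --- with failure probability $O(n^{-\beta-2})$ \emph{uniformly} over $k\in[\alpha_1\log n,\alpha_2\log n]$ and surviving a union bound over $\Theta(n/\log n)$ super-tiles --- is not a routine concentration estimate; it is precisely the kind of ``no long edges and unique large component inside a box'' statement you are trying to prove globally. Deferring it as ``a combinatorial/geometric argument showing the local Poisson sample is well connected'' is circular at the level of difficulty. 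Moreover, even granting the local giant, the final stitching step is unclear: you assert that any component of diameter $\geq c\sqrt{\log n}$ ``must meet the interior of some super-tile in a region forced by the short-edge bound to lie in the local giant,'' but the short-edge bound controls edge lengths, not membership in the giant; a large component could a priori thread between super-tiles without ever being forced into the backbone. You would need a quantitative statement that points outside the local giant are confined to small-diameter pockets, which again is close to restating the lemma. In summary, Part~1 is fine and Part~2 is an honest high-level strategy with its central claim left unproved; as written this is not a proof of the lemma.
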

{\it Remark:} In this paper we use the $O$ notation in a slightly non-standard way. Most of our results depend on $n$ and $k$ where $k=k(n)$ is a function of $n$. When we say $f(n,k)=O(n)$ we mean `uniformly in $k$': that is there is a constant $B$ such that $f(n,k)\le B n$ for all $n$ and $k$ (satisfying our other constraints).

Let $M= \max \left(\lceil 16 c(0.3,0.6,2)\rceil , 30 \right)$. In our argument we shall also need the following lemma, which is an easy modification of Corollary~6~of~\cite{MR2514943}.

\begin{lemma}\label{no long edges in U}
For any $n$ and any integer $k$ with $0.3 \log n < k <0.6 \log n$, the probability that $U_{n,k}$ contains an edge of length at least $\frac{M\sqrt{\log n}}{8}$ is $O(n^{-6})$.
\end{lemma}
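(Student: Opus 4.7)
The plan is to reduce the existence of a long edge to a local ``sparse disk'' event and then bound the latter by a union bound over a fine grid combined with a Poisson tail estimate. The key observation is that if $xy$ is an edge of $U_{n,k}$ of length at least $r:=M\sqrt{\log n}/8$, then (without loss of generality taking $y$ to be one of the $k$ nearest neighbours of $x$) every $k$-nearest neighbour of $x$ lies at distance at least $r$, so the open disk $B(x,r)$ contains at most $k$ Poisson points (counting $x$ itself).

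I would tile $U_n$ by a grid of square cells of side $s:=r/100$, producing a constant number $N$ of cells, independent of $n$. If some Poisson point $x$ in the cell with centre $c_i$ satisfies $|B(x,r)\cap \mathrm{points}|\le k$, then $|x-c_i|\le s\sqrt{2}/2$ forces $B(c_i,r-s\sqrt{2}/2)\subseteq B(x,r)$, so the shrunken disk $D_i:=B(c_i,r-s\sqrt{2}/2)\cap U_n$ contains at most $k$ Poisson points. Hence the occurrence of a long edge implies that $D_i$ contains at most $k$ Poisson points for some $i$, and by a union bound it suffices to estimate this event for a single $i$.

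The area of $D_i$ is at least $\pi(r-s\sqrt{2}/2)^2/4=(1-o(1))\pi r^2/4=(1-o(1))\pi M^2\log n/256$, the worst case being when $c_i$ sits in a corner of $U_n$. With $M\ge 30$ this is at least $11\log n$ for $n$ sufficiently large (the choice of $30$ in the definition of $M$ is tailored to provide exactly this slack). Since $k<0.6\log n$, the standard Poisson Chernoff bound
\[\Prb(\mathrm{Po}(\lambda)\le k)\le e^{-\lambda}(e\lambda/k)^{k}\]
applied with $\lambda=\mathrm{area}(D_i)$ yields a per-cell probability bound of $n^{-\alpha}$ for some fixed $\alpha>6$ (a short calculation with the parameters above gives $\alpha>8$). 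Multiplying by the constant $N$ yields the required $O(n^{-6})$ bound.

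The only real obstacle is the numerical bookkeeping required to verify that $M\ge 30$ pushes the Chernoff exponent strictly past $6$ uniformly over the range $0.3\log n<k<0.6\log n$; beyond that, the argument consists of a single Poisson tail estimate applied to a constant number of cells.
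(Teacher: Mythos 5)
Your argument is correct and reaches the required bound, but the bookkeeping step differs from the paper's. Both proofs reduce the presence of a long edge to a sparse-disc event (fewer than $k$ Poisson points in a quarter-disc of radius $M\sqrt{\log n}/8$) and both finish with the same Poisson lower-tail estimate exploiting $M\geq 30$. The paper then multiplies the per-location probability by the expected number of Poisson points in $U_n$, which is a first-moment bound implicitly invoking Mecke's formula. You instead discretize the possible disc centres to a fine grid of $O(1)$ cells, replace the random disc $B(x,r)$ by a fixed, slightly shrunken disc $D_i$ attached to each cell centre, and union-bound over cells. This avoids conditioning on a Poisson point sitting at $x$, and sidesteps Mecke entirely, at the price of a small loss in radius (which is harmless here because $M\geq 30$ leaves plenty of slack). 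Two minor slips, neither of which affects the conclusion: (i) the parenthetical ``every $k$-nearest neighbour of $x$ lies at distance at least $r$'' is not quite what you mean --- if $y$ is, say, $x$'s third-nearest neighbour and $\lvert xy\rvert\geq r$, the nearer neighbours may well be close; the correct (and used) statement is just that $B(x,r)$ contains at most $k$ points of the process. (ii) With $M=30$ and $s=r/100$ the area of the shrunken quarter-disc comes out to roughly $10.9\log n$, a shade under the claimed $11\log n$; the Chernoff exponent is still comfortably above $6$, so the argument stands.
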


\begin{proof}

This is very similar to the proof of Corollary~6 of~\cite{MR2514943}, but we have to make allowances for the slight  difference in our definition of the event $A_k$. 

Let $k <0.6 \log n$. Suppose some vertex $x\in U_n$ has its $k^{\textrm{th}}$ nearest neighbour lying at a distance of at least $\frac{M\sqrt{\log n}}{8}$. Then there must be fewer than $k<0.6\log n$ points within a quarter-disc about $x$ of area $\frac{\pi M^2 \log n}{256}$. (We need to consider quarter-discs since $x$ may be close to a corner of $U_n$.) Since we picked $M\geq 30$, we have $\frac{\pi M^2 \log n}{256}> 10\log n$. Let $X \sim \textrm{Poisson}(10\log n)$. Then,
\begin{align*}
\mathbb{P}(X<0.6 \log n) &= \sum_{s< 0.6 \log n} \frac{{(10 \log n)}^s }{s!} e^{-10\log n} \\
& < \left(0.6 \log n\right) {\left(\frac{10\log n}{0.6 \log n /e}\right)}^{0.6 \log n}e^{-10\log n}\\
& < 0.6 (\log n) e^{(0.6 \log (50e/3) -10)\log n}\\&< e^{-7 \log n} \qquad\textrm{for $n$ sufficiently large.} 
\end{align*}
Thus the probability that any vertex $x \in U_n$ has its $k^{\textrm{th}}$ nearest neighbour lying at distance at least $\frac{M\sqrt{\log n}}{8}$ away is at most
\begin{align*}
\mathbb{E}\{\textrm{number of vertices in }U_n\} \ \times\  \mathbb{P}(X<0.6 \log n) 
&< M^2 (\log n) e^{-7 \log n} \\&=O\left(n^{-6}\right),
\end{align*}
as required.
\end{proof}

We also need to define what we meant by `most' of $S_n$. Let 
\[
T_n=\left[M\sqrt{\log n},\left(\left\lfloor\tfrac{\sqrt{n}}{M\sqrt{\log n}}\right \rfloor -1\right) M\sqrt{\log n}\right]^2.
\]
The nice feature of $T_n$ is that it is not very close to any of the boundary of $S_n$. The following lemma is a minor restatement of Theorem~1 of \cite{2011arXiv1101.2619W}.
\begin{lemma}\label{l:MJW}
There is a positive constant $0<c_1 <2$ such that if $k>0.3\log n$ then the probability
that $S_{n,k}$ contains any component of diameter $O(\sqrt{\log n})$
not wholly contained in $T_n$ is $O(n^{-c_1})$.
\end{lemma}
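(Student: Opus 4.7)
My plan is to discretize the problem. Cover the boundary strip $S_n\setminus T_n$ (of area $\Theta(\sqrt n\sqrt{\log n})$) by a family of $N=O(\sqrt n/\sqrt{\log n})$ overlapping axis-aligned squares $B_1,\ldots,B_N$, each of side a fixed large constant multiple of $\sqrt{\log n}$, chosen so that any connected component of $S_{n,k}$ of diameter $O(\sqrt{\log n})$ that meets $S_n\setminus T_n$ is wholly contained in at least one $B_i$. A union bound then reduces the lemma to showing that for a single $B_i$ the event that $S_{n,k}$ has such a component inside $B_i$ has probability $o(n^{-1/2})$.

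For a fixed $B_i$ I would use two constraints that any such component $C$ satisfies. \emph{Density}: every vertex of $C$ has its $k$ nearest Poisson neighbours inside $C$, hence $|C|\ge k+1>0.3\log n$. \emph{Isolation}: no Poisson point outside $C$ has a vertex of $C$ among its $k$ nearest neighbours. The isolation constraint is the key input; applied to Poisson points $u$ lying in a thin annulus just outside the minimal bounding box of $C$, it forces at least $k$ further Poisson points into a prescribed disc around each such $u$ disjoint from $C$, which is unlikely for a unit-intensity Poisson process. The density and isolation constraints depend on disjoint regions of the Poisson process, so their probabilities multiply, and Poisson-tail bounds in the same spirit as those used in the proof of Lemma~\ref{no long edges in U} should yield a bound of the form $n^{-c'}$ on a single $B_i$ for some $c'>1/2$. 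Multiplying by $N$ gives $O(n^{-c_1})$ with $c_1=c'-1/2\in(0,2)$.

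The hard part will be handling the four corners of $S_n$: near a corner only a quadrant of the external annulus used in the isolation step lies inside $S_n$, so the isolation saving there is at most a quarter of what it is in the interior. I would plan to deal with this either by taking $B_i$ slightly smaller near the corners --- so that the density event alone, whose Chernoff estimate does not see the boundary, already provides enough savings --- or by invoking Lemma~\ref{no long edges in U} to confine the relevant Poisson points used in the isolation argument to a region comfortably inside $S_n$. Assembling the interior, edge and corner cases should then yield the claimed $O(n^{-c_1})$ bound.
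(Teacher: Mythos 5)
The paper does not prove this lemma itself: it quotes it as a restatement of Theorem~1 of Walters~\cite{2011arXiv1101.2619W}, so there is no in-paper proof to compare against. Judged on its own merits your proposal has the right scaffolding but leaves the crux of the lemma unproved.

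The discretisation and union bound are fine: tiling the boundary strip $S_n\setminus T_n$ by $N=O(\sqrt n/\sqrt{\log n})$ boxes of side $\Theta(\sqrt{\log n})$ and reducing to a single-box estimate of $o(n^{-1/2})$ is exactly the right shape of argument. The problem is that the sentence ``Poisson-tail bounds in the same spirit as those used in the proof of Lemma~\ref{no long edges in U} should yield a bound of the form $n^{-c'}$ on a single $B_i$ for some $c'>1/2$'' is precisely the entire content of the lemma, and it is asserted rather than established. The two constraints you name do not obviously deliver it. The ``density'' fact $|C|\ge k+1>0.3\log n$ carries essentially no probabilistic cost on its own: a unit-intensity Poisson process drops on the order of $\log n$ points in a box of area $\Theta(\log n)$, so asking for $0.3\log n$ of them is not a Chernoff-rare event unless one first pins $C$ into a region of substantially sub-$\log n$ area, which needs a separate geometric argument. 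The ``isolation'' event is the one that carries exponential savings, but near the boundary the savings shrink: along an edge the $k$-th nearest-neighbour disc of a component vertex is roughly a half-disc of area $\gtrsim k$, so the crude ``one empty half-disc'' estimate gives only $e^{-k}\le n^{-0.3}$ when $k\approx 0.3\log n$, which is far too weak. To beat $n^{-1/2}$ one has to show that the empty region surrounding $C$ has total area strictly greater than $\tfrac12\log n$, uniformly over the position of $C$ in the strip, which requires a genuine geometric argument (something in the spirit of the hexagonal-hull computation in Lemma~\ref{geomlem}, adapted to half-planes and quarter-planes), not a back-of-envelope Poisson tail.

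The corner case makes this worse, not merely ``harder'': there only a quarter-disc of area is available both for the $k$ inside points and for the forced empty region, so the exponent from the naive isolation estimate can drop to about $n^{-0.15}$ for $k\approx 0.3\log n$; the number of corner boxes is $O(1)$, which helps, but you would still have to extract a gain of more than $n^{-1/2}$ from some other source, and the proposal offers only a ``plan'' (shrink $B_i$, or invoke Lemma~\ref{no long edges in U}) without carrying it through. Finally, the claim that ``the density and isolation constraints depend on disjoint regions of the Poisson process, so their probabilities multiply'' needs care: both events are defined relative to the random component $C$ itself, so one must first condition on a deterministic candidate region (or sum over a discretised family of candidate regions) before disjointness and independence can be used. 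None of these issues is necessarily fatal --- the lemma is true --- but as written the proposal identifies the right ingredients and then skips the computation that makes them sufficient, which is exactly the part that required a separate paper.
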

We now define two covers of $T_n$ by copies of $U_n$. The \emph{independent} cover $\mathcal{C}_1$ of $T_n$ is obtained by covering $T_n$ with copies of $U_n$ with disjoint interiors. The \emph{dominating} cover $\mathcal{C}_2$ of $T_n$ is obtained from $\mathcal{C}_1$ by replacing each square $V\in\mathcal{C}_1$ by the sixteen translates $V+(i \frac{M\sqrt{\log n}}{4}, j \frac{M\sqrt{\log n}}{4})$, $i,j\in\{0,1,2,3\}$. By construction, we have $\mathcal{C}_1 \subseteq \mathcal{C}_2$ and the copies of $\frac{1}{4}U_n$ corresponding to elements of $\mathcal{C}_2$ cover the whole of $T_n$. Also $|\mathcal{C}_2|< 16 \frac{n}{M^2 \log n}$.

We shall write `$A_k$ occurs in $\mathcal{C}_i$' as a convenient shorthand for  `there is a copy $V$ of $U_n$ in $\mathcal{C}_i$ for which the event corresponding to $A_k$ occurs'. We shall also write $V_k$ for the $k$-nearest neighbour graph on $V$, and $\frac12 V$ for the centre subsquare of $V$.

Lemmas~\ref{mcon}~and~\ref{no long edges in U} allow us to relate, up to some small error, the global connectivity to the local events $A_k$. Before we make this relationship precise we need a technical lemma.

\begin{lemma}\label{locality}
Suppose $S_{n,k}$ contains no edge of length greater than $\frac{M   \sqrt{\log n}}{16}$ and that $V\in \mathcal{C}_2$ is a copy of $U_n$ such that $V_k$ contains no edge of length greater than $\frac{M\sqrt{\log n}}{8}$. Then $S_{n,k}$ has a connected component contained inside $\frac{1}{2}V$ whenever the event corresponding to $A_k$ occurs in $V$.
\end{lemma}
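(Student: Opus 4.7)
The proof hinges on a single geometric locality observation: since $\frac{1}{2}V$ lies at distance exactly $\tfrac{M\sqrt{\log n}}{4}$ from $\partial V$, for every point $x\in\frac{1}{2}V$ the open disc of radius $\tfrac{M\sqrt{\log n}}{4}$ about $x$ is wholly contained in $V$. Combined with the hypothesis that no edge of $V_k$ exceeds $\tfrac{M\sqrt{\log n}}{8}$ in length, this forces the $k$ nearest neighbours (in $V$) of any such $x$ to lie inside that disc, and hence any point of $S_n\setminus V$ is strictly farther from $x$ than each of these $k$ points. Consequently, the $k$ nearest neighbours of $x$ computed in $V$ coincide with its $k$ nearest neighbours computed in $S_n$. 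Call this the \emph{locality property}.

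Let $C$ be the $V_k$-component guaranteed by $A_k$ to lie inside $\frac{1}{2}V$. The plan is to show that $C$ is itself a connected component of $S_{n,k}$, which I will do by verifying two inclusions.

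First, I show $C$ is connected in $S_{n,k}$: any $V_k$-edge $(x,y)$ with both endpoints in $\frac{1}{2}V$ is also an $S_{n,k}$-edge. This is immediate from the locality property, since it equates each endpoint's out-neighbour set in $V_k$ with the same set in $S_{n,k}$, and a $k$-nearest-neighbour edge is present iff one endpoint is among the other's out-neighbours. Second, I show no $S_{n,k}$-edge escapes $C$: fix $x\in C$ and suppose $y$ is an $S_{n,k}$-neighbour of $x$. The hypothesis on $S_{n,k}$ gives $|x-y|\le \tfrac{M\sqrt{\log n}}{16}$, so $y$ still lies deep inside $V$. It suffices to show $(x,y)\in V_k$, since then $y$ is a $V_k$-neighbour of $x\in C$ and thus $y\in C$. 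The edge $(x,y)$ of $S_{n,k}$ arises from one of two directed relations: if $y$ is among $x$'s $k$ nearest in $S_n$, the locality property (applied at $x\in\frac{1}{2}V$) immediately puts $y$ among $x$'s $k$ nearest in $V$; if instead $x$ is among $y$'s $k$ nearest in $S_n$, then passing from the ambient point set to the subset $V$ can only remove competitors and can only promote $x$'s rank, so $x$ remains among $y$'s $k$ nearest in $V$. Either way $(x,y)\in V_k$, as required.

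The only slightly delicate point in the argument is the asymmetry between the two directed relations that generate edges in a $k$-nearest-neighbour graph: the outgoing relation (``$y$ is among $x$'s $k$ nearest'') is well-behaved when $x\in\frac{1}{2}V$ via the locality property, whereas the incoming relation (``$x$ is among $y$'s $k$ nearest'') needs the separate monotonicity observation that restricting from $S_n$ to $V$ only promotes $x$ in $y$'s ranking. Both edge-length hypotheses — one on $S_{n,k}$, one on $V_k$ — are needed precisely to keep these two kinds of edges inside the region where each of these two arguments applies.
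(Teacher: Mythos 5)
Your proof is correct and uses essentially the same ingredients and the same approach as the paper: the $V_k$ edge bound ensures the $k$ nearest neighbours (in $V$) of a point in $\frac{1}{2}V$ are far from $\partial V$ and hence coincide with its $k$ nearest in $S_n$, the $S_{n,k}$ edge bound keeps $S_{n,k}$-neighbours of central points inside $V$, and monotonicity of rank under restriction handles the ``incoming'' edges. The paper packages this slightly differently, defining the edge sets $\Gamma_V$ and $\Gamma_S$ of edges meeting $\frac{1}{2}V$ in $V_k$ and $S_{n,k}$ respectively and proving $\Gamma_V=\Gamma_S$ by showing all ``extra'' $V_k$-edges live near $\partial V$, but the underlying mathematical content is the same as your ``locality property.''
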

\begin{proof}Let $\Gamma_V$ denote the subgraph of $V_k$ consisting of all edges with at least one end in $\frac12 V$, and let $\Gamma_S$ be the subgraph of $S_{n,k}$ consisting of all edges with at least one end in $\frac 12 V$. We aim to show that $\Gamma_V=\Gamma_S$. Obviously this will imply the lemma.

Trivially, $S_{n,k}[V]$ is a subset of $V_k$. What extra edges can there be in $V_k$? We are assuming that $S_{n,k}$ contains no edges of length greater than $\frac{M\sqrt{\log n}}{16}$. Thus only the vertices within distance $\frac{M\sqrt{\log n}}{16}$ of the boundary of $V$ may be joined in $S_{n,k}$ to points in $S_n \setminus V$. So every edge in $V_k \setminus S_{n,k}[V]$ (i.e., all extra edges) must meet one of these vertices.%

Now $V_k$ contains no edges of length greater than $\frac{M\sqrt{\log n}}{8}$ , so that all the vertices meeting an edge of $V_k\setminus S_{n,k}[V]$ must lie a distance at most
\[
\frac{M\sqrt{\log n}}{8}+ \frac{M\sqrt{\log n}}{16} < \frac{M \sqrt{\log n}}{4}
\]
from the boundary of $V$.  Since the vertices inside the central subsquare $\frac{1}{2}V$ all lie at distance at least $\frac{M\sqrt{\log n}}{4}$ from the boundary of $V$, they do not meet any extra edges, and we have $\Gamma_V=\Gamma_S$ as claimed.
\end{proof}

\begin{theorem}\label{approx}
For all $n \in \mathbb{N}$ and all integers $k$ with $ 0.3 \log n< k< 0.6 \log n$, and $c_1$ as given by Lemma~\ref{l:MJW},
\[\mathbb{P}(S_{n,k} \textrm{ not connected}) = \mathbb{P}(A_k \textrm{ occurs in } \mathcal{C}_2)+O(n^{-c_1}). \]  
\end{theorem}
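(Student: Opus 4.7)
The plan is to show that, up to a bad event of probability $O(n^{-c_1})$, the global event ``$S_{n,k}$ disconnected'' coincides with the union event ``$A_k$ occurs somewhere in $\mathcal{C}_2$''. I would proceed by first collecting several high-probability regularity events on which both directions of the implication become geometrically transparent. Applying Lemma~\ref{mcon} with $\alpha_1=0.3$, $\alpha_2=0.6$, $\beta=2$ and using the choice $M\geq 16c(0.3,0.6,2)$, let $E_1$ be the event that $S_{n,k}$ contains no edge of length $\geq M\sqrt{\log n}/16$ and at most one component of diameter $\geq M\sqrt{\log n}/16$; then $\mathbb{P}(E_1^c)=O(n^{-2})$. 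Applying Lemma~\ref{no long edges in U} and union-bounding over the $|\mathcal{C}_2|=O(n/\log n)$ squares, let $E_2$ be the event that every $V\in\mathcal{C}_2$ has no $V_k$-edge of length $\geq M\sqrt{\log n}/8$; then $\mathbb{P}(E_2^c)=O(n^{-5})$. Finally, let $E_3$ be the event from Lemma~\ref{l:MJW} that every small-diameter component of $S_{n,k}$ is contained in $T_n$, so that $\mathbb{P}(E_3^c)=O(n^{-c_1})$. Working on $E=E_1\cap E_2\cap E_3$, which has $\mathbb{P}(E^c)=O(n^{-c_1})$, both inclusions will follow easily.

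For the easy direction, if $E$ holds and $A_k$ occurs in some $V\in\mathcal{C}_2$, then Lemma~\ref{locality} applies directly and produces a connected component of $S_{n,k}$ contained in $\tfrac12 V$; since the Poisson process almost surely also places points outside $V$, this forces $S_{n,k}$ to be disconnected. For the opposite direction, if $E$ holds and $S_{n,k}$ is disconnected, then by $E_1$ some component $K$ of $S_{n,k}$ has diameter less than $M\sqrt{\log n}/16$, and by $E_3$ this component $K$ lies inside $T_n$. Because the cover $\mathcal{C}_2$ was constructed so that its $\tfrac14 V$-subsquares tile $T_n$, some vertex of $K$ lies in $\tfrac14 V$ for some $V\in\mathcal{C}_2$; and since $\tfrac12 V$ extends $M\sqrt{\log n}/8$ beyond $\tfrac14 V$ in every direction --- strictly more than the diameter of $K$ --- the entire component is contained in $\tfrac12 V$, so $A_k$ occurs at $V$.

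Combining the two inclusions with $\mathbb{P}(E^c)=O(n^{-c_1})$ then yields the claimed identity. The main obstacle, if it can be called one, is simply the book-keeping needed to align the three length-scales $M\sqrt{\log n}/16$, $M\sqrt{\log n}/8$ and $M\sqrt{\log n}/4$ used respectively to bound edges in $S_{n,k}$, edges in $V_k$, and the thickness of the buffer band $\tfrac12 V\setminus \tfrac14 V$; these have been pre-arranged via the choice of $M$, so once they are in place the argument reduces to a union bound.
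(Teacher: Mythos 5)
Your proposal is correct and follows essentially the same route as the paper: collect the high-probability regularity events (no long edges in $S_{n,k}$ via Lemma~\ref{mcon}, no long edges in any $V_k$ via Lemma~\ref{no long edges in U} plus a union bound, all small components inside $T_n$ via Lemma~\ref{l:MJW}), then on the intersection of these events run the two inclusions. Your organisation is slightly tidier than the paper's — you gather all three bad events into a single $E$ and work on $E$ throughout, whereas the paper uses only the no-long-edges-in-$S_{n,k}$ event for the forward inclusion and all three for the converse. One small point to tighten: your converse direction ends by asserting ``$K\subseteq\tfrac12 V$, so $A_k$ occurs at $V$'', but $A_k$ is an event about the graph $V_k$, not about $S_{n,k}$, so you still need to argue that a component of $S_{n,k}$ lying in $\tfrac12 V$ is also a component of $V_k$; this is exactly what the $\Gamma_V=\Gamma_S$ conclusion in the proof of Lemma~\ref{locality} provides, and its hypotheses are available to you on $E$, so the step is fine but should be cited explicitly just as you did in the forward direction.
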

\begin{proof}
Suppose that $A_k$ occurs in $\mathcal{C}_2$. Then there is a copy $V$ of $U_n$ in $\mathcal{C}_2$ for which $A_k$ occurs; in other words, $V_k$ has a connected component $X$ wholly contained inside the central subsquare $\frac{1}{2}V$. By Lemma~\ref{mcon} and our choice of $M$, the probability that $S_{n,k}$ contains an edge of length at least $\frac{M\sqrt{\log n}}{16}$ is $O(n^{-2})$. Let us assume this does not happen. Then there are no edges between $\frac{1}{2}V$ and $S_n \setminus V$ in $S_{n,k}$. It follows that $X$ is a connected component in $S_{n,k}$ as well as in $V_k$, so that $S_{n,k}$ is disconnected. Thus 
\[\mathbb{P}(S_{n,k} \textrm{ not connected}) \geq \mathbb{P}(A_k \textrm{ occurs in } \mathcal{C}_2)+O(n^{-2}).\]

Conversely, suppose $S_{n,k}$ is not connected.
It must contain at least two connected components. By Lemma~\ref{mcon} and our choice of $M$, the probability that $T_{n,k}$ contains any edge of length at least $\frac{M\sqrt{\log n}}{16}$ or two components of diameter at least $\frac{M\sqrt{\log n}}{16}$ is at most $O(n^{-2})$. By Lemma~\ref{l:MJW} the probability that there is a small component not contained entirely within $T_n$ is $O(n^{-c_1})$. 
Also by Lemma \ref{no long edges in U}, the probability that $U_{n,k}$ has any edge longer than $\frac{M\sqrt{\log n}}{8}$ is $O(n^{-6})$. The probability that $V_k$ has an edge longer than $\frac{M\sqrt{\log     n}}{8}$ for some copy $V$ of $U_n$ in $\mathcal{C}_2$ is therefore at most $|\mathcal{C}_2| O(n^{-6}) = O(n^{-5})$. 
Thus the probability of any of the above occuring in $S_{n,k}$ is at most $O\left ( n^{-c_1}\right)$. 

From now on let us assume none of the above occur.
Then at least one of the connected components of $S_{n,k}$ is contained in $T_n$ and has diameter less than $\frac{M\sqrt{\log     n}}{16}$. Let $X$ be such a component and $x$ be a vertex of $X$. By our definition of $\mathcal{C}_2$ there is a copy $V$ of $U_n$ such that $x \in \frac{1}{4}V$. For any point $x' \notin \frac{1}{2}V$, we have $d(x,x')> \frac{M\sqrt{\log n}}{8}$. By our assumption on the diameter of $X$, we have that $x'\notin X$ and hence $X \subseteq \frac{1}{2}V$. So $X$ is contained entirely inside the central subsquare $\frac{1}{2}V$. Now $V_k$ and $S_{n,k}$ satisfy the hypotheses of Lemma~\ref{locality}, hence the event corresponding to $A_k$ occurs in $V$, and
\begin{align*}
\mathbb{P}(S_{n,k} \textrm{ not connected}) 
\leq \mathbb{P}(A_k \textrm{ occurs in } \mathcal{C}_2)+O(n^{-c_1}).
\end{align*}
The theorem follows. 
\end{proof}

Roughly speaking $\mathbb{P}(A_k \textrm{ occurs in }C_2)$ is of order $\frac{n}{\log n}\mathbb{P}(A_k)$ so, from a heuristic perspective, Theorem~\ref{approx} tells us that as we increase $k$ the transition of $S_{n,k}$ from \whp  not connected to \whp  connected happens at the same time as the transition from $\mathbb{P}(A_k)\gg \frac{\log   n}{n}$ to $\mathbb{P}(A_k) \ll \frac{\log n}{n}$.
The following is a precise statement of this relationship.

\begin{corollary}\label{appimp}
There exists a constant $c_2>0$ such that for all $\varepsilon:\ 0< \varepsilon \leq \frac{1}{2}$, all integers $n> \varepsilon^{-c_2}$ and all integers $k:\ 0.3\log n< k < 0.6 \log n$, if
\[\mathbb{P}(S_{n,k} \textrm{ connected})\geq \varepsilon\]
holds then 
\[\mathbb{P}(A_k) \leq e\log\left(\frac{1}{\varepsilon}\right)\frac{M^2\log n}{n}.\]
Conversely, if
\[ \mathbb{P}(A_k) \leq \frac{\varepsilon}{e^4} \frac{M^2\log n}{n},\]
then
\[\mathbb{P}(S_{n,k} \textrm{ connected}) > 1-\varepsilon.\] 
\end{corollary}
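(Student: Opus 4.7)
The plan is to sandwich Theorem~\ref{approx} between the two covers $\mathcal{C}_1$ and $\mathcal{C}_2$: the disjoint cover $\mathcal{C}_1$ gives genuine independence and hence an (exponential) lower bound on $\mathbb{P}(A_k \text{ occurs in } \mathcal{C}_2)$, while the overlapping cover $\mathcal{C}_2$ gives a clean union-bound upper bound. Both implications then follow by feeding these bounds into the identity
\[\mathbb{P}(S_{n,k}\text{ not connected}) = \mathbb{P}(A_k\text{ occurs in }\mathcal{C}_2) + O(n^{-c_1})\]
from Theorem~\ref{approx}, provided $n$ is large enough compared to $1/\varepsilon$ that the additive $O(n^{-c_1})$ error can be absorbed.

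For the first implication, assume $\mathbb{P}(S_{n,k}\text{ connected}) \geq \varepsilon$. Theorem~\ref{approx} yields $\mathbb{P}(A_k\text{ in }\mathcal{C}_2) \leq 1 - \varepsilon + O(n^{-c_1})$, and since $\mathcal{C}_1\subseteq\mathcal{C}_2$ the same bound holds for $\mathcal{C}_1$. The $|\mathcal{C}_1|$ squares in $\mathcal{C}_1$ are interior-disjoint, so the spatial independence of the Poisson process makes the corresponding $A_k$-events mutually independent and identically distributed, giving
\[(1 - \mathbb{P}(A_k))^{|\mathcal{C}_1|} \geq \varepsilon - O(n^{-c_1}).\]
Taking logarithms and using the inequality $-\log(1-x) \geq x$ on the left yields $|\mathcal{C}_1|\mathbb{P}(A_k) \leq -\log(\varepsilon - O(n^{-c_1}))$. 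Since $|\mathcal{C}_1| = (1-o(1))\tfrac{n}{M^2 \log n}$ by construction of $T_n$, rearranging produces a bound of the desired form $\mathbb{P}(A_k) \leq \log(1/\varepsilon)\tfrac{M^2 \log n}{n}(1+o(1))$, and the multiplicative slack factor of $e$ easily accommodates the $o(1)$ corrections once $n>\varepsilon^{-c_2}$ with $c_2$ large enough that $O(n^{-c_1})/\varepsilon = o(1)$.

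For the second implication, assume $\mathbb{P}(A_k) \leq \tfrac{\varepsilon}{e^4}\tfrac{M^2 \log n}{n}$. A direct union bound over $\mathcal{C}_2$, using $|\mathcal{C}_2|< 16 n/(M^2\log n)$, gives $\mathbb{P}(A_k\text{ in }\mathcal{C}_2) \leq 16\varepsilon/e^4$, and then Theorem~\ref{approx} yields
\[\mathbb{P}(S_{n,k}\text{ not connected}) \leq \tfrac{16\varepsilon}{e^4} + O(n^{-c_1}).\]
Since $16/e^4 < 0.3$, the right-hand side is strictly less than $\varepsilon$ as soon as $n$ is large enough to force $O(n^{-c_1}) < (1 - 16/e^4)\varepsilon$, which again holds provided $c_2$ is chosen large.

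The proof is essentially routine given Theorem~\ref{approx}; the only thing to watch is the bookkeeping of the $O(n^{-c_1})$ error term relative to $\varepsilon$. In both implications the constraint is of the form $n^{c_1}\varepsilon \to \infty$, so any $c_2 > 1/c_1$ (with the implicit constants from the $O(\cdot)$ absorbed by the generous slack factors $e$ and $e^4$) does the job. There is no genuine analytic obstacle: the two-cover sandwich idea, which has already been set up, does all the real work.
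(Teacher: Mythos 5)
Your proposal is correct and follows essentially the same route as the paper's proof: use Theorem~\ref{approx} together with the independence of the $A_k$ events over the interior-disjoint cover $\mathcal{C}_1$ (then take logarithms and apply $\log(1-x)\le -x$) for the first implication, and a union bound over $\mathcal{C}_2$ for the converse. The bookkeeping of the $O(n^{-c_1})$ error and the choice of $c_2$ match the paper's argument as well.
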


\noindent%
{\it Remark:\ } There is nothing special about the constants $e$ and $e^4$\/: we picked these values for later convenience, but all we needed was $e>2$ and $e^4>16$.

\begin{proof}
Suppose $\mathbb{P}(S_{n,k}\textrm{ is connected})\geq \varepsilon$.
The copies of $U_n$ contained in $\mathcal{C}_1$ have disjoint interiors, hence the event corresponding to $A_k$ occurs in each of them independently. Therefore 
\[\mathbb{P}(A_k \textrm{ occurs in } \mathcal{C}_1)= 1-{(1-\mathbb{P}(A_k))}^{|\mathcal{C}_1|}.\]
Now,
\begin{align*}
\mathbb{P}(A_k \textrm{ occurs in } \mathcal{C}_1)&\leq \mathbb{P}(A_k \textrm{ occurs in } \mathcal{C}_2)
\qquad&\text{since $\mathcal{C}_1\subset \mathcal{C}_2$}\\
&= \mathbb{P}(S_{n,k} \textrm{ not connected})+O(n^{-c_1})\qquad&\text{by Theorem~\ref{approx}}\\
& \leq 1- \varepsilon +O(n^{-c_1}).
\end{align*}
Hence,
\[{(1-\mathbb{P}(A_k))}^{|\mathcal{C}_1|} \geq \varepsilon +O(n^{-c_1}).  \]
Hence, provided we chose $c_2$ large enough, we see that, for all $n>\varepsilon^{-c_2}$, the right hand side is at least $\frac{\varepsilon}{2}$. Taking logarithms on both sides and using the inequality $\log(1-x)\leq -x$ for $0\leq x\leq 1$ yields
\[-|\mathcal{C}_1|\mathbb{P}(A_k)\geq \log (\varepsilon/2)  \]
so
\[\mathbb{P}(A_k) \leq \frac{1}{|\mathcal{C}_1|}\left(\log \frac{1}{\varepsilon/2}\right)= \frac{1}{|\mathcal{C}_1|}\left(\log \frac{1}{\varepsilon}+\log 2\right). \]
Now $\mathcal{C}_1$ contains $\frac{n}{M^2\log n}(1+O(\sqrt{\frac{\log     n}{n}}))$ copies of $U_n$, $0<\varepsilon \leq \frac{1}{2}$ and $e>2$. Hence, provided that we choose our constant $c_2$ sufficiently large, for all $n>\varepsilon^{-c_2}$ we have
\[\mathbb{P}(A_k) \leq \frac{eM^2\log n}{n} \log \frac{1}{\varepsilon}.\]

For the converse
suppose that $\mathbb{P}(A_k) \leq \varepsilon\frac{M^2\log   n}{e^4n}$. By Theorem \ref{approx} we have
\begin{align*}
\mathbb{P}(S_{n,k} \textrm{ not connected})&=
\mathbb{P}(A_k \textrm{ occurs in } \mathcal{C}_2)+O(n^{-c_1})\\
&\leq |\mathcal{C}_2| \mathbb{P}(A_k)+O(n^{-c_1})\\
&\leq |\mathcal{C}_2|\varepsilon\frac{M^2\log n}{e^4n} +O(n^{-c_1})\\
&\leq \varepsilon\frac{16}{e^4}+O(n^{-c_1})\qquad\qquad&\text{since $|\mathcal{C}_2|<\tfrac{16 n}{M^2\log n}$.}
\end{align*}

Since $0< \varepsilon \leq \frac{1}{2}$ and $\frac{16}{e^4}<1$, we have (again providing we chose $c_2$ sufficiently large) for all $n> \varepsilon^{-c_2}$,
\[\mathbb{P}(S_{n,k} \textrm{ not connected}) < \varepsilon. \]
\end{proof}

\section{Small components have high point density}
Having made precise the relationship between $\mathbb{P}(A_k)$ and $\mathbb{P}(S_{n,k} \textrm{ connected})$, we turn our attention to $A_k$. Our aim in this section is to show that provided $k>0.3\log n$, small connected components in $U_{n,k}$ witnessing  $A_k$ must have a region with `high point density'.

Let $N$ be an integer constant whose value we shall specify later. We consider a perfect tiling of $U_n$ by square tiles of area $\frac{\log n}{N^2}$. (Such a perfect tiling exists as $U_n$ has area $M^2\log n$ and $M$, $N$ are integers.) The expected number of points of the Poisson point process on $U_n$ in each tile is $\frac{\log   n}{N^2}$. Fix $0<\eta\leq\frac{1}{2}$. Given a tile $Q$, we say that the event $A_{k,Q}$ occurs if $A_k$ occurs \emph{and} the tile $Q$ receives more than $(1+\eta)\frac{\log n}{N^2}$ points. Similarly, we say that the event $A_{k,Q}'$ occurs if $A_k$ occurs \emph{and} the tile $Q$ receives more than $(1+\frac{\eta}{2})\frac{\log n}{N^2}$ points.

\begin{lemma}\label{geomlem}
Suppose $k\in[0.3 \log n,0.6\log n]$. Then
\[\mathbb{P}(A_k\setminus \bigcup_Q A_{k,Q}) =O(n^{-1.1}).\]
\end{lemma}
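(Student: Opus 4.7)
Suppose that $A_k$ occurs, with witness component $X\subseteq\tfrac12 U_n$, and that no tile has more than $(1+\eta)\log n/N^2$ points. Because every edge of $U_{n,k}$ incident to $X$ stays inside $X$, each vertex of $X$ sees its $k$ nearest neighbours inside $X$, so $|X|\ge k+1$. My approach is to derive two key geometric consequences of the no-dense-tile hypothesis and then combine them via a first-moment bound.

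\textbf{Two geometric consequences.} First, an averaging argument: for every $x\in X$, the ball $B(x,r_k(x))$ contains $k+1$ points of the Poisson process and is covered by tiles of density at most $1+\eta$; taking $N$ large enough that tile-boundary corrections are $o(k)$, this forces $r_k(x)^2 \ge r_*^2$ where $r_*:=\sqrt{k/(\pi(1+\eta))}\,(1-o(1))$. Second, an isolation argument: since $X$ is a whole component, no Poisson point $z\notin X$ with $d:=\mathrm{dist}(z,X)$ has its $k$-NN in $X$, so $B(z,d)\setminus\{z\}$ must contain at least $k$ Poisson points lying outside $X$. The no-dense-tile hypothesis then implies $d\ge r_*$. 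Consequently the ``buffer zone'' $\{y\in U_n\setminus X:\mathrm{dist}(y,X)<r_*\}$ is entirely void of Poisson points.

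\textbf{First-moment bound.} I would bound the probability using a Mecke-style calculation. The expected number of ordered $(k+1)$-tuples $Y=(y_1,\dots,y_{k+1})$ of Poisson points in $\tfrac12 U_n$ forming such a configuration is at most
\[
\frac{1}{(k+1)!}\int_{(\tfrac12 U_n)^{k+1}} e^{-|D(Y)|}\,dy_1\cdots dy_{k+1},
\]
where $D(Y)$ is the forbidden void region determined by $Y$ --- the union of the $k$-NN balls $B(y_i,R_i)$ (with $R_i=\max_{j\neq i}|y_i-y_j|$) together with the $r_*$-neighbourhood of $Y$. Parametrising over the cluster diameter $\rho$, with the no-dense-tile constraint $\pi\rho^2\ge (k+1)/(1+\eta)$, the exponential factor $e^{-|D(Y)|}$ dominates the Poisson density $(\pi\rho^2)^k/k!$ and the location factor $|\tfrac12 U_n|=O(\log n)$; a Stirling computation, optimised over $\rho$, should then yield the claimed bound $O(n^{-1.1})$ for $\eta=1/2$, $k\in[0.3\log n,0.6\log n]$, and $N$ sufficiently large.

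\textbf{Main obstacle.} The principal difficulty is the fine quantitative accounting required to extract the precise exponent $1.1$. A crude lower bound on $|D(Y)|$ (such as just $\pi r_*^2 \sim k/(1+\eta)$) yields a significantly weaker exponent and does not suffice. One must combine the union of $k$-NN balls with the buffer zone tightly, and may further sharpen the integral by restricting to a canonical witness like the leftmost point of $Y$ (whose $k$ nearest neighbours then all lie in the right half-plane, contributing an extra factor $(1/2)^k$). Choosing $N$ large enough that the tile-boundary corrections in the averaging step are negligible is also essential.
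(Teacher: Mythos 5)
Your high-level strategy is sound --- the no-dense-tile hypothesis forces a large void region around the witness component, and a union/first-moment bound over void configurations then gives the result --- and your two geometric consequences are both correct and essentially match the building blocks of the paper's argument. However, there is a genuine gap, which you yourself flag as ``the principal difficulty'': you never establish that the void has area at least $1.1\log n$, and the constructions you sketch fall short. The $r_*$-buffer zone, by Steiner and the isoperimetric inequality, has area at least about $\mathrm{perim}(\mathrm{conv}(X))\,r_* + \pi r_*^2 \ge 3\pi r_*^2 = 3k/(1+\eta)$; at $k=0.3\log n$, $\eta=1/2$ this is only $0.6\log n$. Your ``leftmost point'' half-disc adds at most one more $k/(1+\eta)\approx 0.2\log n$, still short, and you have not shown it to be disjoint from the buffer. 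The missing idea in the paper is the \emph{hexagonal hull} of $X$: take the six tangent lines at angles $0,\pi/3,2\pi/3$, each touching $\mathrm{conv}(X)$ at some $x_i\in X$; reflect $B(x_i,r_k(x_i))\cap H(X)$ across the $i$-th tangent line; the six reflected regions lie outside $H(X)$, are pairwise disjoint (each sits in its own angular wedge between angle bisectors of the hexagon), and each has area $\ge k/(1+\eta) - O(\log n/N)$. Summing gives void $\ge 6k/(1+\eta)-O(\log n/N) > 1.1\log n$ once $N$ is chosen large. The factor $6$ is essential here: four cardinal extreme points would give only $\approx 0.8\log n$, which is not enough, so even the multi-directional version of your ``leftmost point'' idea must be set up via the hexagon, not the axis-aligned bounding box.

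A secondary issue is the Mecke bound itself. Your integral is over ordered $(k+1)$-tuples, but the witness component can have arbitrarily many points, so the configuration space is larger and the combinatorics more delicate. The paper avoids this entirely by discretizing: the void is rounded down to a union of tiles, of which $U_n$ contains only $M^2N^2=O(1)$, so the union bound is over a constant number of tile-subsets, each of probability at most $\exp(-1.1\log n)=n^{-1.1}$. This is both cleaner and sidesteps the Stirling optimisation you were anticipating.
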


 The main idea of the proof of this geometric lemma is the following: suppose $X$ is a connected component of $U_{n,k}$ wholly contained inside $\frac{1}{2}U_n$, and suppose $x$ is a vertex of $X$ which lies `on the boundary' of $X$. Write $r$ for the distance between $x$ and its $k$-th nearest neighbour. 

If $U_{n,k}$ contains no tile with high density (i.e. no tile receiving more than $(1+\eta)$ times the expected number of points), then then intersection of the ball of radius $r$ centred at $x$ with the `convex hull' of $X$ must have large area (about $\frac{k}{1+\eta}-o(k)$). In particular looking outwards from $X$ at $x$ there must be quite a few empty tiles. Doing the above in several different directions one gets that $X$ is surrounded by a wide `sea' of empty tiles of area at least $1.1\log n$. Since the number of tiles $M^2N^2$ is a constant, the probability that such a collection of empty tiles exists is
$O(n^{-1.1})$, yielding the desired result.

Before we start, we need the following technical result.

\begin{lemma}\label{boundo}
Let $\gamma: [0,1]\rightarrow U_n$ be a closed continuously differentiable curve in $U_n$. Let $l(\Gamma)$
be the length of the curve $\Gamma=\gamma([0,1])$, and let $D$ be the number of tiles it meets. Then
\[D \leq \frac{9 l(\Gamma)}{\sqrt{\log n}/N}.\]
\end{lemma}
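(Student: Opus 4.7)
Write $s = \sqrt{\log n}/N$ for the side length of a tile, so the claim reads $D \le 9\, l(\Gamma)/s$. My plan is a short arc-length argument. I would parametrise $\gamma$ by arc length and subdivide the parameter interval $[0, l(\Gamma)]$ into $m = \lceil l(\Gamma)/s \rceil$ pieces of length at most $s$ each. The image of each piece is then a sub-arc of $\Gamma$ of diameter at most $s$, which consequently fits inside some axis-aligned bounding box of side $s$.

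The key geometric observation is that any such $s \times s$ bounding box overlaps at most four tiles of side $s$: the extremal configuration is a box that straddles a corner of the tiling. Hence each piece contributes at most four tiles, and summing gives
\[
D \;\le\; 4m \;\le\; \frac{4\, l(\Gamma)}{s} + 4.
\]
This is at most $9\, l(\Gamma)/s$ as soon as $l(\Gamma) \ge (4/5)s$. For shorter curves I would appeal to the closedness of $\gamma$: any two points of $\Gamma$ are joined along $\gamma$ by an arc of length at most $l(\Gamma)/2$, so $\Gamma$ itself has diameter at most $l(\Gamma)/2 < s/2$, and the same bounding-box observation gives $D \le 4$, again comfortably below $9\, l(\Gamma)/s$ except for pathologically small $l(\Gamma)$.

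I do not foresee any serious obstacle: the lemma is a purely geometric statement about how many cells of a grid a curve can visit, and makes no use of the Poisson process or of the graph $U_{n,k}$. The only step requiring real care is pinning down the constant; the value $9$ is generous enough to absorb the additive $+4$ from the ceiling in $m$ and to cover the short-curve regime, and a slightly finer subdivision (for instance into pieces of length a fixed fraction of $s$) would give essentially the same argument with a cleaner bookkeeping.
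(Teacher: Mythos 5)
Your argument is correct in spirit but takes a genuinely different route from the paper. The paper works directly on the set $S$ of tiles the curve meets: it puts the king-graph adjacency on the tiling, greedily extracts a maximal independent subfamily $S'\subseteq S$ (so $|S|\le 9|S'|$ since each tile has at most $8$ neighbours), and observes that a closed curve visiting the pairwise non-adjacent tiles of $S'$ in order must travel at least one tile side $s=\sqrt{\log n}/N$ between consecutive ones, giving $l(\Gamma)\ge |S'|\,s$. Your proof instead chops $\gamma$ into $\lceil l/s\rceil$ arcs of length at most $s$, fits each into an $s\times s$ bounding box, and counts tiles per box. Both are clean; yours actually yields the sharper asymptotic $D\le 4\,l(\Gamma)/s+O(1)$, at the cost of an additive term you must then absorb.

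Two small points. First, ``a box of side $s$ meets at most four tiles'' is correct if tiles are taken half-open (so they partition $U_n$), or if the box has side strictly less than $s$; with closed tiles and a box whose edges align exactly with grid lines it can touch up to nine. This is a measure-zero degeneracy and is easy to finesse (use strictly shorter sub-arcs, or declare tiles half-open), but it is worth saying. Second, and more substantively, you correctly flag the short-curve regime $l(\Gamma)<\tfrac{4}{9}s$ as problematic and wave it away as pathological. In fact the lemma \emph{as stated} fails there: a tiny circle of radius $\epsilon$ around a grid vertex meets four tiles while $9\,l/s = 18\pi\epsilon/s$ can be made less than $4$. The paper's own proof has the identical blind spot --- when the greedy independent set $S'$ has size $1$, the inequality $l(\Gamma)\ge |S'|\,s$ being used is false for a curve contained in a single tile. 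Neither proof is salvageable without the implicit hypothesis $l(\Gamma)\ge s$ (say), which is harmless: in the one place the lemma is applied, $\Gamma$ is the boundary of a convex region of area at least $0.2\log n$, hence of length $\gg s$. So your proof is as correct as the paper's, and your instinct that only very short curves cause trouble is exactly right --- but it would be cleaner to state the small-$l$ exception explicitly rather than claim the bound ``comfortably'' holds.
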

\begin{proof}
We define a graph $G$ on the set of tiles of $U_n$ by setting an edge between tiles $Q$ and $Q'$ if they meet in at least one point. ($G$ is just the usual square integer lattice on ${\{1,2, \ldots MN\}}^2$ with diagonal edges added.) Every tile has at most $8$ neighbours in this graph. Let $S$ be the set of tiles met by $\Gamma$. Greedily pick a maximal subset $S'\subseteq S$ which is independent in $G$: pick the tile $Q_1$ with $\gamma(0)\in Q_1$, then pick the first nonadjacent tile $Q_2$ which $\gamma(t)$ next meets and so on.  We have $D=|S|\leq 9|S'|$. Now $\Gamma$ is continuous and cycles through the tiles of $S'$ before coming back to $Q_1$. Since the minimum distance between points lying in nonadjacent tiles is  at least one tile length (i.e., $\frac{\sqrt{\log n}}{N}$), it follows that the length of $\Gamma$ satisfies
\[l(\Gamma) \geq |S'|\frac{\sqrt{\log n}}{N}.\]  
Substituting $D\leq 9|S'|$ and rearranging terms, we get the desired inequality
\[D \leq \frac{9l(\Gamma)}{\sqrt{\log n}/N}. \ \]
\end{proof}

\begin{proof}[Proof of Lemma~\ref{geomlem}]
Let $k$ be an integer with $ 0.3 \log n<k <0.6\log n$. By Lemma~\ref{no long edges in U} the probability of $U_{n,k}$ containing any edge of length at least $\frac{M\sqrt{\log n}}{8}$ is $O(n^{-6})$. Since we are trying to show $A_k \setminus \bigcup_Q A_{k,Q}$ has probability at most $O(n^{-1.1})$, we may assume in what follows that all edges in $U_{n,k}$ have length strictly less than $\frac{M\sqrt{\log n}}{8}$.

Suppose $\mathcal{P}$ is a pointset for which $A_k$ occurs but $A_{k,Q}$ does not occur for any tile $Q$. Write $U_{n,k}(\mathcal{P})$ for the $k$ nearest neighbours graph on $U_n$ associated with the pointset $\mathcal{P}$. Let $X$ be the set of vertices of a connected component of $U_{n,k}(\mathcal{P})$ wholly contained in $\frac{1}{2}U_n$. Using an idea of Balister, Bollob\'as, Sarkar and Walters~\cite{MR2135151}, we shall consider the \emph{hexagonal hull} of $X$, $H(X)$, which we now define. 

\begin{figure}
\begin{center}
\includegraphics{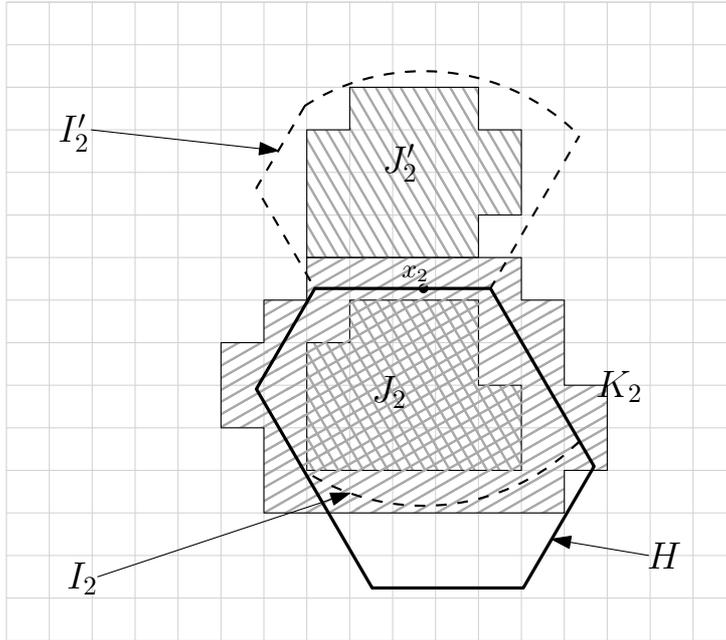}
\end{center}
\caption{The hexagonal hull $H$ and regions  $I_2,J_2,K_2$ and $I_2',J_2'$.}
\end{figure}

We consider the six tangents to the convex hull of $X$ making angles of $0$, $\frac{\pi}{3}$ and $\frac{2 \pi}{3}$ with the $x$-axis (two for each angle). Together, these define a hexagon $H(X)$ containing $X$ whose edges are segments of the tangents (some of which may have zero length). We shall call $H(X)$ the hexagonal hull of $X$, and label its edges $E_1, E_2, \ldots E_6$ in cyclic order so that the top and bottom edges parallel to the $x$-axis are $E_2$ and $E_5$ respectively .

Consider $E_1$. There exists $x_1 \in E_1 \cap X$. Let $r_1$ be distance between $x_1$ and its $k$-th nearest neighbour. Let $I_1$ be the intersection of the ball of radius $r_1$ centred at $x_1$ with the hexagon $H(X)$. Let $I_1'$ be the reflection of $I_1$ with respect to $L_1$. Since $I_1' \subset U_n \setminus H(X)$ and since every point of $I_1'$ lies at distance at most $r_1$ from $x_1$, it follows that $I_1'$ contains no point of $\mathcal{P}$. We shall show that $I_1'$ covers many tiles.

Let $J_1$ be the union of all of the tiles wholly contained inside $I_1$, and let $J_1'$ be the union of all of the tiles wholly contained inside $I_1'$. Let $K_1$ be the union of all of the tiles meeting $I_1$ and let $K_1'$ be the union of all of the tiles meeting $I_1'$. Since no tile in $U_n$ contains more than $(1+\eta)\frac{\log n}{N^2}$ points of $\mathcal{P}$, it follows that $K_1$ is the union of at least $\frac{k}{(1+\eta)\log n/N^2}$ tiles.

A tile is contained in $K_1 \setminus J_1$ only if it meets the boundary of $I_1$. Now, since $I_1$ is a convex subset of a disc of radius $r_1$, the boundary of $I_1$ has length less than $2\pi r_1$, so by Lemma \ref{boundo}, $K_1 \setminus J_1$ is the union of at most $\frac{18 \pi r_1}{\sqrt{\log n}/N}$ tiles. By the same argument, $K_1'\setminus J_1'$ is the union of at most $\frac{18 \pi r_1}{\sqrt{\log n}/N}$ tiles. Denote by $|I_1|$ the area of $I_1$, and similary $|I_1'|, |J_1|, \ldots |K_1'|$. We have
\begin{align*}
|J_1'| & \geq |I_1'|-|K_1'\setminus J_1'| \\
&\geq |I_1|-|K_1'\setminus J_1'|\\
&\geq |K_1|-|K_1\setminus J_1|-|K_1'\setminus J_1'|. 
\end{align*}
Now each tile has area $\frac{\log n}{N^2}$. We therefore have
\begin{align*}
|J_1'| &\geq \frac{\log n}{N^2} \left(\frac{k}{(1+\eta)\log n /N^2}- \frac{36 \pi r_1}{\sqrt{\log n}/N} \right)\\
& \geq \frac{k}{1+\eta} -\frac{36 \pi r_1 \sqrt{\log n}}{N} \\
& \geq  \frac{k}{1+\eta} -\frac{9 M \pi \log n }{2 N} \qquad\textrm{since }r_1< \frac{M\sqrt{\log n}}{8}.
\end{align*}
We turn at last to the choice of $N$: let $N= 10 \lceil 27 M \pi \rceil$. For $k>0.3 \log n$ and $\eta \leq \frac{1}{2}$, the above becomes:
\begin{align*}
|J_1'| & > \frac{11}{60}\log n.
\end{align*}

For $i=2,3\ldots 6$ we may define $I_i$, $I_i'$, etc... as above. It is easy to see that the $J_i'$ are disjoint: each $J_i'$ lies between the bisectors of two adjacent angles of the convex hexagon $H(X)$. Repeating the argument above to bound below $|J_2'|$, $\ldots \ |J_6'|$, we get:
\begin{align*}
\left|\bigcup_{i=1}^{6} J_i'\right|&=\sum_{i=1}^{6} |J_i'|\\
&> \frac{11}{10}\log n.
\end{align*}
Thus there are at least $\frac{11}{10}\log n/(\log n / N^2)= 110 {(\lceil 27 M \pi \rceil)}^2$ tiles which receive no points. There are at most $\binom{M^2N^2}{110{(\lceil 27 M \pi \rceil)}^2}$ ways of choosing this many tiles. Since $M$ and $N$ are constants this is just a (large) constant.
The probability that there exist $110 {(\lceil 27 M \pi   \rceil)}^2$ empty tiles (i.e., empty tiles with total area $\frac{11}{10}\log n$) in $U_n$ is therefore
\[O( \exp (-\frac{11}{10}\log n)) =O(n^{-1.1}).\] 
Thus
\[\mathbb{P}(A_k \setminus \bigcup_Q A_{k,Q}) = O(n^{-1.1}),\]
as claimed.

\end{proof}

\section{The sharp connectivity threshold for $S_{n,k}$}

In Lemma~\ref{geomlem} of the previous section we proved that small components witnessing $A_k$ have high point density. We use this fact to prove a sharpness result for $\mathbb{P}(A_k)$, which by Corollary~\ref{appimp} implies in turn a sharp threshold for the connectivity of $S_{n,k}$ (i.e., Theorem~\ref{maintheo}). We shall do this by showing that, for all $k'>k$, most pointsets in $A_{k'}$ may be obtained by adding points to already dense parts of $A_k$ pointsets.

We shall need the following lemma, which is a convenient restatement of Theorem~5 of~\cite{MR2135151}.

\begin{lemma}\label{prep}
There exists a positive constant $c_3>0$ such that for every $\varepsilon$ with $0 < \varepsilon \leq \frac{1}{2}$ and all $n>\varepsilon^{-c_3}$, 
\begin{align*}
\textrm{if $k \leq 0.3 \log n$, then }& \mathbb{P}(S_{n,k} \textrm{ connected}) < \varepsilon, \\
\textrm{and if $k \geq 0.6 \log n$, then }& \mathbb{P}(S_{n,k} \textrm{ connected}) > 1-\varepsilon. \\
\end{align*}
\end{lemma}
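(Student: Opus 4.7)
The plan is to invoke Theorem~5 of \cite{MR2135151} and to observe that its proof in fact produces polynomial (rather than merely \whp) rates of convergence, so that the desired $c_3$ follows via Corollary~\ref{appimp}. The key point is that both bounds $0.3 < 0.3043 < c_*$ and $0.6 > 1/\log 7 > c_*$ are \emph{strict}, so we are in the interior of the respective regimes and all relevant Poisson exponents fall strictly away from the critical threshold.

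For the disconnected half ($k \le 0.3\log n$), the argument in \cite{MR2135151} exhibits an explicit planted local configuration inside a window of area $\Theta(\log n)$ whose presence forces a connected component of $U_{n,k}$ wholly inside the window's central subsquare. Because the configuration involves only $\Theta(\log n)$ point-placement constraints and some empty-region constraints of total area strictly less than $\log n$ (with the slack coming from $0.3 < c_*$), its Poisson probability is at least $n^{-\beta}$ for some absolute $\beta < 1$. Tiling most of $S_n$ with $\Theta(n/\log n)$ disjoint such windows and exploiting independence across disjoint windows, followed by Theorem~\ref{approx}, gives
\[
\mathbb{P}(S_{n,k} \text{ connected}) \le \exp\!\bigl(-\Omega(n^{1-\beta}/\log n)\bigr) + O(n^{-c_1}),
\]
which is below $\varepsilon$ for $n > \varepsilon^{-c_3}$ provided $c_3$ is chosen large enough.

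For the connected half ($k \ge 0.6\log n$), the complementary argument of \cite{MR2135151} yields $\mathbb{P}(A_k) \le n^{-1-\delta}$ for some absolute $\delta > 0$: any small component witnessing $A_k$ forces an empty barrier region of area strictly greater than $(1+\delta)\log n$ (the strict inequality $0.6 > c_*$ producing the gap $\delta > 0$), and there are only polynomially many shapes to union-bound over. Substituting this bound into the second half of Corollary~\ref{appimp} yields $\mathbb{P}(S_{n,k} \text{ connected}) > 1 - \varepsilon$ for $n > \varepsilon^{-c_3}$.

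The main obstacle is extracting the explicit exponents $\beta < 1$ and $\delta > 0$ from the qualitative arguments of \cite{MR2135151}: the original statements are phrased in terms of \whp\ events, whereas here we need concrete polynomial rates. However, in both directions this is bookkeeping rather than a genuine difficulty---the Poisson probabilities already appear in the proofs and need only be read off, and the strict gaps $c_* - 0.3 > 0$ and $0.6 - c_* > 0$ guarantee that all the implicit constants depend only on the exponents $0.3$ and $0.6$ and not on $\varepsilon$.
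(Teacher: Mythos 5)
The paper does not supply a proof here; it cites Theorem~5 of~\cite{MR2135151} as ``a convenient restatement'', so your proposal rests on the same ultimate source and the core approach is the same. Your supplementary route, however, has a mismatch: you propose to feed the disconnected half ($k\leq 0.3\log n$) through Theorem~\ref{approx} and the connected half ($k\geq 0.6\log n$) through Corollary~\ref{appimp}, but both of those results are stated --- and the constant $M$ is tuned, via Lemma~\ref{mcon} --- only for $0.3\log n < k < 0.6\log n$, which is precisely the complementary range to the one Lemma~\ref{prep} addresses. Monotonicity of the connectivity probability in $k$ lets you reduce to the boundary values $k=\lfloor 0.3\log n\rfloor$ and $k=\lceil 0.6\log n\rceil$, but those lie at or just outside the strict window, so to make the internal derivation rigorous you would need to re-prove Theorem~\ref{approx}- and Corollary~\ref{appimp}-type statements with slightly enlarged windows, or instead read the polynomial rates directly out of the explicit Poisson estimates in~\cite{MR2135151} without routing through this paper's machinery at all. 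In short: right conclusion, right citation, but the detour through Theorem~\ref{approx} and Corollary~\ref{appimp} does not apply as written.
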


Recall that in the previous section we fixed constants $0< \eta \leq \frac{1}{2}$ and $N\in \mathbb{N}$ and introduced a tiling of $U_n$ into $M^2N^2$ small square tiles as well as the families of events $A_{k,Q}$ and $A_{k,Q}'$. Lemma~\ref{geomlem} says that provided $\mathbb{P}(A_k) = \Omega(n^{-1})$, we have
$\mathbb{P}(\bigcup_Q A_{k,Q}) =(1-O(n^{-0.1}))\mathbb{P}(A_k)$. 
Thus if a small $A_k$ connected component occurs, then with high probability some tile $Q$ receives far more points than expected. We show that if $k'>k$ then most $A_{k'}$ pointsets can be  obtained by adding points to an overpopulated tile of an $A_k$ pointset.

We need one more piece of notation: given a tile $Q$ let 
 $A_{k, Q, L}$ be the event that if we remove any $L$ points from $Q$ then $A_{k,Q}'$ occurs.
\begin{lemma}\label{remove}
For any tile $Q$ and positive integer $L< \frac{\eta \log n}{2N^2}$  we have
\[A_{k+L,Q} \subseteq A_{k,Q, L}.\]
\end{lemma}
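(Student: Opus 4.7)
The plan is to fix any pointset $\cP$ witnessing $A_{k+L,Q}$ and any subset $R\subset \cP\cap Q$ of size $L$, and show that the reduced pointset $\cP':=\cP\setminus R$ witnesses $A_{k,Q}'$. Two things must be checked: the density condition on the tile $Q$, and the existence of a connected component of $U_{n,k}(\cP')$ wholly contained in $\frac12 U_n$.

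The density condition is routine bookkeeping. Since $A_{k+L,Q}$ occurs on $\cP$, the tile $Q$ contains strictly more than $(1+\eta)\log n /N^2$ points, and removing $L<\eta \log n/(2N^2)$ of them leaves the count in $Q$ strictly above $(1+\eta/2)\log n/N^2$, as required for $A_{k,Q}'$.

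The substantive step is the component condition. The key observation I would use is a monotonicity property of $k$-nearest neighbour graphs under point deletion: for every $v\in\cP'$, the $k$ points of $\cP'$ nearest to $v$ lie among the $k+L$ points of $\cP$ nearest to $v$, since at most $L$ of the original $k+L$ nearest have been removed. Consequently every edge of $U_{n,k}(\cP')$ is also an edge of $U_{n,k+L}(\cP)$. By $A_{k+L}$ there is a connected component $X$ of $U_{n,k+L}(\cP)$ wholly contained in $\frac12 U_n$; since $X$ is a full component of $U_{n,k+L}(\cP)$, no edge of $U_{n,k+L}(\cP)$ joins a vertex of $X$ to a vertex outside $X$, and by the monotonicity observation no edge of $U_{n,k}(\cP')$ joins a vertex of $X\cap \cP'$ to a vertex outside $X$ either. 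Hence $X\cap \cP'$ is a union of connected components of $U_{n,k}(\cP')$, each contained in $X\subseteq \frac12 U_n$.

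Finally I would verify that $X\cap \cP'$ is nonempty, so that at least one such component exists: every connected component in a $(k+L)$-nearest neighbour graph contains at least $k+L+1$ vertices (since every vertex has degree at least $k+L$), and only $L$ points have been removed, leaving $|X\cap \cP'|\geq k+1\geq 1$. I do not expect a serious obstacle; the one thing that could conceivably fail would be an edge of $U_{n,k}(\cP')$ from $X\cap \cP'$ to a vertex outside $X$, and the monotonicity observation is precisely what rules this out.
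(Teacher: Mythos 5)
Your proof is correct and takes essentially the same route as the paper's: establish that $U_{n,k}(\cP')$ is a subgraph of $U_{n,k+L}(\cP)$ via the monotonicity of the nearest-neighbour relation under deletion of at most $L$ points, note that the witnessing component of $U_{n,k+L}(\cP)$ still has surviving vertices, and check the tile count. You spell out more carefully than the paper why $X\cap\cP'$ is a union of full components of $U_{n,k}(\cP')$ contained in $\frac12 U_n$, but the underlying argument is identical.
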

\begin{proof}
Suppose that  $\mathcal{P}\subset U_n$ is a pointset for which the event $A_{k+L,Q}$ occurs. It is enough to show that the removal of any $L$ points from $\mathcal{P}\cap Q$ yields a pointset $\mathcal{P}'$ for which the event $A_{k,Q}'$ occurs.

As in Lemma~\ref{geomlem}, write $U_{n,k}(\mathcal{P})$ for the $k$ nearest neighbours graph on $U_n$ associated with the pointset $\mathcal{P}$. Since we remove at most $L$ vertices from $\mathcal{P}$ every vertex in $\mathcal{P}$ loses at most $L$ of its $k+L$ nearest neighbours; the set of its $k$ nearest neighbours in $\mathcal{P}'$ is thus a subset of the set of its $k+L$ nearest neighbours in $\mathcal{P}$. It follows that $U_{n,k}(\mathcal{P}')$ is a subgraph of $U_{n,k+L}(\mathcal{P})$.

$U_{n,k+L}(\mathcal{P})$ has a connected component wholly contained inside $\frac{1}{2}U_n$. This component must contain at least $k+L+1>L$ vertices and since we have removed only $L$ vertices from $\mathcal{P}$ to obtain $\mathcal{P}'$ some vertices of this component remain: that is, $U_{n,k}(\mathcal{P}')$ must also have component wholly contained inside $\frac{1}{2}U_n$.
Thus $\mathcal{P}'\in A_k$. 

Moreover the number of points in $\mathcal{P}'\cap Q$ is exactly
\begin{align*}
|\mathcal{P} \cap Q| -L &> (1+\eta)\frac{\log n}{N^2} -\frac{\eta \log n}{2 N^2}
= (1+\frac{\eta}{2})\frac{\log n}{N^2}
\end{align*} 
and hence $\mathcal{P}'\in A_{k,Q}'$ as claimed.
\end{proof}

\begin{corollary}\label{corolb}
Let $L< \frac{\eta \log n}{2N^2}$ be a positive integer and $Q$ a tile. Then
\[\mathbb{P}(A_{k+L,Q}) < (1+\frac{\eta}{2})^{-L} \mathbb{P}(A_{k,Q}').\] 
\end{corollary}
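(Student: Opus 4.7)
The plan is to combine Lemma~\ref{remove} (which gives $A_{k+L,Q}\subseteq A_{k,Q,L}$) with a standard random-thinning plus Poisson ratio comparison. It therefore suffices to prove
\[\mathbb{P}(A_{k,Q,L}) < (1+\eta/2)^{-L}\,\mathbb{P}(A_{k,Q}').\]

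I would first condition on the restriction of $\mathcal{P}$ to $U_n\setminus Q$. Since $\mathcal{P}$ is a Poisson process of intensity $1$, the count $|\mathcal{P}\cap Q|$ is Poisson with mean $\mu = \log n/N^2$, independent of $\mathcal{P}\setminus Q$, and given $|\mathcal{P}\cap Q|=j$ the $j$ points inside $Q$ are i.i.d.\ uniform on $Q$. Writing $p_j=e^{-\mu}\mu^j/j!$ and conditioning also on $j$, the decomposition
\[\mathbb{P}(A_{k,Q,L}\mid \mathcal{P}\setminus Q) \;=\; \sum_{j} p_j\,\mathbb{P}(A_{k,Q,L}\mid \mathcal{P}\setminus Q,\,|\mathcal{P}\cap Q|=j)\]
effectively runs only over $j>(1+\eta/2)\mu+L$, since $A_{k,Q,L}$ requires more than $(1+\eta/2)\mu$ points to remain in $Q$ after $L$ are removed.

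The key comparison step is that if a configuration lies in $A_{k,Q,L}$ then deleting any, and hence a uniformly random, $L$-subset of $\mathcal{P}\cap Q$ yields a configuration in $A_{k,Q}'$; since a uniformly random $(j-L)$-subset of $j$ i.i.d.\ uniform points is again $j-L$ i.i.d.\ uniform points, this would give
\[\mathbb{P}(A_{k,Q,L}\mid \mathcal{P}\setminus Q,\,|\mathcal{P}\cap Q|=j) \;\le\; \mathbb{P}(A_{k,Q}'\mid \mathcal{P}\setminus Q,\,|\mathcal{P}\cap Q|=j-L).\]
Finally, the Poisson ratio $p_j/p_{j-L}=\mu^L/\prod_{i=0}^{L-1}(j-i)$ strictly satisfies $p_j/p_{j-L}<(1+\eta/2)^{-L}$ throughout the summation range, since each factor $j-i$ strictly exceeds $(1+\eta/2)\mu$. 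Substituting both inequalities and reindexing $j\mapsto j-L$ yields the claim after taking expectations over $\mathcal{P}\setminus Q$. The only conceptual step that warrants careful justification is the random-thinning identity for i.i.d.\ uniform points; once this is set up, the remainder is routine Poisson arithmetic and I do not anticipate a substantive obstacle.
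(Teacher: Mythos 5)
Your proposal is correct and follows essentially the same route as the paper: decompose the Poisson process as an independent process on $Q$ and on $U_n\setminus Q$, use Lemma~\ref{remove} to reduce to comparing $A_{k,Q,L}$ with $A_{k,Q}'$, observe that conditionally on the count in $Q$ the points are i.i.d.\ uniform so that deleting $L$ of them (the paper's ``last $L$ added'' is exactly your random $L$-subset) gives a sample of $j-L$ i.i.d.\ uniform points, and then bound the Poisson mass ratio $p_j/p_{j-L}$ by $(1+\eta/2)^{-L}$ using the fact that only $j$ with $j-L>(1+\eta/2)\log n/N^2$ contribute. The only difference from the paper is a cosmetic reindexing ($j$ versus $m=j-L$) and your slightly more explicit statement of the thinning identity.
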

\begin{proof}
First, note that we may consider the Poisson process on $U_n$ as the union of a Poisson process on $Q$ and an independent Poisson process on the disjoint set $U_n\setminus Q$. Now a Poisson point process on $Q$ is just a uniform point process placing \[Z\sim~\textrm{Poisson}\left(\frac{\log n}{N^2}\right)\] points in $Q$. 
\goodbreak

We may think of this uniform point process as adding points one by one. If $A_{k,Q,L}$ occurs then in particular $A_{k,Q}'$ occurs if we remove the last $L$ points added by the point process. It follows that
\begin{align*}
\mathbb{P}(A_{k+L,Q}) & \leq \mathbb{P}(A_{k,Q,L}) &\textrm{by Lemma~\ref{remove}} \\
&= \sum_m \mathbb{P}(A_{k,Q,L}|Z=m+L) \mathbb{P}(Z=m+L) \\
& \leq \sum_m\mathbb{P}(A_{k,Q}'|Z=m) \mathbb{P}(Z=m+L) &\textrm{by definition of $A_{k,Q,L}$}\\
&= \sum_m \mathbb{P}(A_{k,Q}'|Z=m) \mathbb{P}(Z=m) \prod_{i=1}^L \frac{N^{-2} \log n }{m+i}
\end{align*}
By the definition of $A_{k,Q}'$, 
\[\mathbb{P}(A_{k,Q}'|Z=m)=0 \qquad\textrm{ for all }m< (1+\frac{\eta}{2})\frac{\log n}{N^2}.\] 
For $m\geq (1+\frac{\eta}{2})\frac{\log n}{N^2}$, we have
\[
\prod_{i=1}^L \frac{N^{-2}\log n }{m+i}  < 
\left(\frac{N^{-2}\log n}{m}\right)^L
\le {(1+\frac{\eta}{2})}^{-L}.
\]
It follows that
\[\mathbb{P}(A_{k+L,Q})< {(1+\frac{\eta}{2})}^{-L} \mathbb{P}(A_{k,Q}'),\]
as claimed.
\end{proof}

\begin{theorem}\label{aksha}
There are constants $c_4$ and $L \in \mathbb{N}$ such that
for all $n> c_4$ and all $k$ with
\[k\in[0.3\log n, 0.6 \log n] \textrm{ and } \mathbb{P}(A_k) \geq n^{-1.05} \]
we have:
\[\mathbb{P}(A_{k+L}) < e^{-1}\,   \mathbb{P}(A_{k}).\]

\end{theorem}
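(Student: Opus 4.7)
The approach is to bound $\mathbb{P}(A_{k+L})$ by first localising any small component in the $k+L$-nearest-neighbour graph to an overpopulated tile via Lemma~\ref{geomlem}, then using Corollary~\ref{corolb} to trade each of the $L$ extra neighbours for a factor of $(1+\eta/2)^{-1}$, and finally choosing $L$ large enough so that this shrinkage beats the union-bound cost of $M^2N^2$ tiles. The hypothesis $\mathbb{P}(A_k)\geq n^{-1.05}$ will be needed only to absorb the additive error coming from Lemma~\ref{geomlem}.

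Concretely, the first step is to apply Lemma~\ref{geomlem} at level $k+L$. Since $L$ is an absolute constant and $k\leq 0.6\log n$, the value $k+L$ stays in a range where both Lemma~\ref{no long edges in U} and Lemma~\ref{geomlem} go through verbatim for all sufficiently large $n$. This gives
\[
\mathbb{P}(A_{k+L}) \leq \sum_Q \mathbb{P}(A_{k+L,Q}) + O(n^{-1.1}),
\]
where the sum runs over the $M^2N^2$ tiles of $U_n$. Corollary~\ref{corolb} then yields $\mathbb{P}(A_{k+L,Q}) < (1+\eta/2)^{-L}\mathbb{P}(A_{k,Q}')$ for every tile $Q$, provided $L < \eta\log n/(2N^2)$, and since $A_{k,Q}'\subseteq A_k$ trivially, summing over tiles gives
\[
\mathbb{P}(A_{k+L}) < (1+\eta/2)^{-L}\,M^2N^2\,\mathbb{P}(A_k) + O(n^{-1.1}).
\]

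It remains to choose the constants. Fix $\eta=1/2$ and let $L$ be the least positive integer with $(5/4)^{-L}M^2N^2 \leq 1/(2e)$; this is a fixed integer depending only on $M$ and $N$, and the constraint $L < \eta\log n/(2N^2)$ is then automatic once $n$ is large enough. With this choice the first term on the right is at most $(2e)^{-1}\mathbb{P}(A_k)$. For the additive error, the hypothesis $\mathbb{P}(A_k)\geq n^{-1.05}$ gives $O(n^{-1.1}) = O(n^{-0.05})\mathbb{P}(A_k)$, which is bounded by $(2e)^{-1}\mathbb{P}(A_k)$ once $n>c_4$ for some absolute constant $c_4$. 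Adding the two contributions yields $\mathbb{P}(A_{k+L}) < e^{-1}\mathbb{P}(A_k)$, as required.

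The proof is really a clean union bound combined with the per-tile shrinkage from Corollary~\ref{corolb}; the one place where there is anything to watch is the interplay between the $O(n^{-1.1})$ error and the assumed lower bound $\mathbb{P}(A_k)\geq n^{-1.05}$, and the exponent $1.1$ in Lemma~\ref{geomlem} was chosen precisely so that $n^{-1.1} = o(\mathbb{P}(A_k))$ under this hypothesis.
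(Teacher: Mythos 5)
Your proposal is correct and follows essentially the same path as the paper's proof: apply Lemma~\ref{geomlem} at level $k+L$, union-bound over tiles, invoke Corollary~\ref{corolb} tile by tile, use $A_{k,Q}'\subseteq A_k$, and choose $L$ so the per-tile shrinkage factor overwhelms the $M^2N^2$ union-bound cost, with the hypothesis $\mathbb{P}(A_k)\geq n^{-1.05}$ used only to absorb the $O(n^{-1.1})$ additive error. The only cosmetic differences are that you fix $\eta=1/2$ and split the target $e^{-1}$ evenly between the two terms, whereas the paper keeps $\eta$ general and aims for $e^{-2}$ on the main term; neither affects the substance.
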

\begin{proof}
Let $L$ be an integer constant which we shall specify later on. As $\eta$, $L$ and $N$ are all constants, for an appropriate choice of our constant $c_4>0$ and all $n>c_4$, we have $L < \frac{\eta \log n}{2N^2}$ so that the hypothesis of Corollary~\ref{corolb} is satisfied. Also $k\in[0.3 \log n, 0.6\log n]$, so the hypothesis of Lemma~\ref{geomlem} is satisfied as well. Applying the two lemmas successively, we get:
\begin{align*}
\mathbb{P}(A_{k+L}) &= \mathbb{P}(\bigcup_Q A_{k+L,Q})+O(n^{-1.1}) \qquad&\text{by Lemma~\ref{geomlem}}\\ 
& \leq \sum_Q \mathbb{P}(A_{k+L,Q})+O(n^{-1.1}) \\
& \leq \sum_Q (1+\frac{\eta}{2})^{-L} \mathbb{P}(A_{k,Q}')+O(n^{-1.1}) &\text{by Corollary~\ref{corolb}}\\
& \leq M^2N^2 (1+\frac{\eta}{2})^{-L} \mathbb{P}(A_k)+O(n^{-1.1})&\text{(since $\Prb(A_{k,Q}')\le \Prb(A_k)$.)}
\end{align*}
We now choose  $L$: let
\[L =\left\lceil  \frac{\log\left( M^2N^2e^2\right)}{\log (1+\frac{\eta}{2})}\right\rceil\] 
so that
\[M^2N^2 (1+\frac{\eta}{2})^{-L} \leq e^{-2} .\]
By assumption $\mathbb{P}(A_k)\geq n^{-1.05}$, so for an appropriate choice of our constant $c_4>0$ and all $n>c_4$, we have
\[\mathbb{P}(A_{k+L}) < e^{-1}\, \mathbb{P}(A_k),\]
as claimed. (Note that the choice of our constant $L$ depended only on the constant $M$, $N$ and $\eta$.) 
\end{proof}

\begin{proof}[Proof of Theorem~\ref{maintheo}]
In essence, we just iterate Theorem~\ref{aksha}. However, we have to choose the right parameters and make sure the conditions hold at each stage.

We choose $\gamma>0$ such that $\gamma>\max\left(c_2, c_3, \log_2
c_4,20 \right)$.  Note that, since $M\geq 30$, we have
$\frac{e^4}{M^2\log n}\leq \frac{e^4}{900 \log 2}<0.09 $ for all
$n\geq 2$ so $n^{-1/\gamma}>\frac{e^4}{M^2\log n}n^{-0.05}$ for all
$n\ge2$.

Suppose that $n$ and $k$ are such that $\Prb(S_{n,k} \textrm{ is
  connected})>\eps$ and $n>\eps^{-\gamma}$.  We may assume that
$\varepsilon \leq \frac{1}{2}$ and $\mathbb{P}(S_{n,k} \textrm{
  connected})\leq 1-\varepsilon$, for otherwise we have nothing to
prove.  Since $n>\eps^{-\gamma}>\eps^{-c_3}$ and
$\varepsilon<\mathbb{P}(S_{n,k} \textrm{ connected})<1-\eps$,
Lemma~\ref{prep} implies that $0.3 \log n<k<0.6\log n$. Thus, for $n>
\varepsilon^{-\gamma}$, the assumptions of Corollary~\ref{appimp} and
Theorem~\ref{aksha} are therefore satisfied.

Let $C$ be a strictly positive real constant which we shall specify
later on. There are three cases to consider.

Suppose first of all that
\[k+\lfloor C\log \frac{1}{\varepsilon} \rfloor\geq 0.6 \log n.\]
Then by Lemma~\ref{prep} we have $\mathbb{P}(S_{n,k+\lfloor C\log(1/\eps)\rfloor} \textrm{
  connected}) > 1- \varepsilon$, and we are done.

Secondly suppose  that $k+\lfloor C\log \frac{1}{\varepsilon} \rfloor< 0.6 \log n$ and
\[\mathbb{P}(A_{k+\lfloor C\log (1/\varepsilon) \rfloor})<n^{-1.05}.\] 
Since $n>\eps^{-\gamma}$, \[n^{-1.05} <n^{-1/\gamma}\frac{M^2\log n}{e^4n}< \varepsilon \frac{M^2\log n}{e^4 n},\] 
so that by Corollary \ref{appimp} we have $\mathbb{P}(S_{n,k+\lfloor C\log(1/\eps)\rfloor} \textrm{ connected}) > 1- \varepsilon$, and we are done.

Finally if
\[k+\lfloor C\log \frac{1}{\varepsilon} \rfloor< 0.6 \log n \qquad\textrm{and}\qquad \mathbb{P}(A_{k+\lfloor C\log (1/\varepsilon) \rfloor})\geq n^{-1.05},\] then since $\mathbb{P}(A_{k'})$ monotonically decreases as $k'$ increases we have $\mathbb{P}(A_{k'})\geq n^{-1.05}$ for every $k': k\leq k' \leq k+\lfloor C \log \frac{1}{\varepsilon} \rfloor$. Thus by Theorem~\ref{aksha}
we have, for all $k': k \leq k' \leq k+\lfloor C \log \frac{1}{\varepsilon} \rfloor$,
\[\mathbb{P}(A_{k'+L}) < e^{-1} \mathbb{P}(A_{k'}).\]
Since $k<0.6 \log n$, $\mathbb{P}(S_{n,k} \textrm{ connected})\leq 1- \varepsilon$ implies by Corollary~\ref{appimp} that $\mathbb{P}(A_k) \leq \frac{eM^2\log n}{n}\log \frac{1}{\varepsilon}$. Thus
\begin{align*}
\mathbb{P}(A_{k+\lfloor C \log \frac{1}{\varepsilon} \rfloor}) & \leq \exp \left( -\left\lfloor \frac{\lfloor C \log 1/ \varepsilon \rfloor}{L} \right\rfloor \right) \mathbb{P}(A_k) \\
& \leq \exp \left( -\left\lfloor \frac{C \log 1/ \varepsilon }{L} \right\rfloor \right) \left( \frac{eM^2\log n}{n} \log \frac{1}{\varepsilon}\right)\\
& \leq \exp \left( -\left\lfloor \frac{C \log 1/ \varepsilon }{L} \right\rfloor +1 +\log \log {1}/{\varepsilon} \right) \frac{M^2\log n}{n}. 
\end{align*} 

We now choose $C$: let
\[ C = \left(2+\frac{6}{\log 2}\right)L. \]
Since $\varepsilon \leq \frac{1}{2}$, we have that $\frac{\log 1/\varepsilon}{\log2}\geq 1$. Thus for this choice of $C$ we have
\begin{align*}
-\left\lfloor \frac{C \log 1/\varepsilon }{L}\right\rfloor &+1+ \log \log \frac{1}{\varepsilon}  \\
& \leq 2 + \log \log \frac{1}{\varepsilon} -\frac{C \log 1/\varepsilon}{L}\\
& = (2-2\frac{\log 1/\varepsilon}{\log 2})+(\log \log \frac{1}{\varepsilon}- \log \frac{1}{\varepsilon})- \frac{4 \log 1/\varepsilon}{\log 2} -\log \frac{1}{\varepsilon} \\
& \leq -4 - \log \frac{1}{\varepsilon}. 
\end{align*}

\noindent%
Substituting this in the above bound for $\mathbb{P}(A_{k+\lfloor C \log {1}/{\varepsilon} \rfloor})$ we get 
\[\mathbb{P}(A_{k+\lfloor C \log 1 / \varepsilon \rfloor}) \leq \varepsilon \frac{M^2 \log n}{e^4n}.\]
By Corollary \ref{appimp}
this implies
\[\mathbb{P}(S_{n,k+ \lfloor C \log {1}/{\varepsilon} \rfloor} \textrm{ connected}) > 1-\varepsilon,\]
 proving the theorem.
\end{proof}

\section{Higher connectivity}

In this section, we shall apply our sharpness result
Theorem~\ref{maintheo} to prove Theorem~\ref{s-connected}, proving a
conjecture of Balister, Bollob\'as, Sarkar and
Walters~\cite{MR2479805}. Suppose that $\cP$ is any pointset in in the
square $S_n=[0,\sqrt n]^2$.  As before, let $G_k(\cP)$ denote the
$k$ nearest neighbour graph on $\cP$.

\begin{lemma}\label{l:main}
Suppose $S_{n,k}$ is the random geometric graph with $k$ an
integer lying between $0.3\log n$ and $0.6\log n$.  Let $s<0.1\log
n$. Then there is a constant $c_6$ such that
\[ 
\Prb(S_{n,k}\text{ not $s$-connected})\le c_6 (\log n )
\Prb(S_{n,k-1}\text { not $(s-1)$-connected})+O(n^{-3}).
\]
Moreover
\[ 
\Prb(S_{n,k}\text{ not $s$-connected})\le (c_6 \log n )^{s-1}
\Prb(S_{n,k-s+1}\text { not connected})+O\left(n^{-3}\log n\right).
\]
\end{lemma}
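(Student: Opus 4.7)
\medskip

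\noindent\textbf{Plan.} The proof would rest on a vertex-removal observation combined with a Palm first-moment bound applied in the local framework of Section~1, followed by iteration. The key graph-theoretic fact is this: if $\cP$ is a pointset, $T\subseteq \cP$ is a vertex cut of $G_k(\cP)$ with $|T|\le s-1$, and $v\in T$, then $G_{k-1}(\cP\setminus\{v\})$ is not $(s-1)$-connected. Indeed, removing a single point from $\cP$ can promote each remaining point's $k$-nearest-neighbour rank by at most one, so $G_{k-1}(\cP\setminus\{v\})$ is a subgraph of $G_k(\cP)$ restricted to $\cP\setminus\{v\}$; the residual cut $T\setminus\{v\}$, of size at most $s-2$, therefore still separates the two sides of the original cut.

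I would then introduce a local event $A_k^{(s)}$ on $U_n$, analogous to $A_k$: the event that $U_{n,k}$ admits a cut of size at most $s-1$ some side of which lies entirely inside $\frac{1}{2}U_n$. Extending Lemma~\ref{mcon} (whose error $O(n^{-\beta})$ may be taken as small as desired by enlarging $c$) and Lemma~\ref{l:MJW} to cover failures of $s$-connectivity, and combining with Lemma~\ref{locality}, would give the natural analogue of Theorem~\ref{approx}:
\[
\Prb(S_{n,k}\text{ not $s$-connected})\ \le\ \sum_{V\in\mathcal{C}_2}\Prb(A_k^{(s)}(V))+O(n^{-3}),
\]
together with a Corollary~\ref{appimp}-type lower bound $|\mathcal{C}_1|\Prb(A_{k-1}^{(s-1)})\lesssim \Prb(S_{n,k-1}\text{ not $(s-1)$-connected})+O(n^{-3})$ obtained via independence on the disjoint cover $\mathcal{C}_1$. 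The extension to small cuts is plausible since $s-1=o(\log n)\ll k$: removing $s-1$ vertices of $S_{n,k}$ cannot produce two components of large diameter without creating a long edge or a rare boundary effect.

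The crucial estimate would then be a single-copy Palm bound. By the vertex-removal observation, $\mathbf{1}_{A_k^{(s)}(V)}(\cP)\le |\{v\in \cP\cap V:A_{k-1}^{(s-1)}(\cP\setminus\{v\})\text{ holds}\}|$, and the Slivnyak--Mecke formula for the unit-intensity Poisson process on $V$ yields
\[
\Prb(A_k^{(s)}(V))\ \le\ \int_V \Prb(A_{k-1}^{(s-1)}(V))\,dv\ =\ M^2\log n\cdot\Prb(A_{k-1}^{(s-1)}(V)).
\]
Summing over $V\in\mathcal{C}_2$ and invoking the lower bound above (with $|\mathcal{C}_2|=16|\mathcal{C}_1|$) would recover the first inequality with $c_6$ a constant multiple of $M^2$.

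For the iterated bound I would apply the one-step inequality $s-1$ times, reducing each of $k$ and $s$ by one per step. The leading factor compounds to $(c_6\log n)^{s-1}$, and the cumulative additive errors, using $s=o(\log n)$ and taking the error constants in the one-step bound generously, fit into a single $O(n^{-3}\log n)$ term. The main technical obstacle will be the localization step: producing a Theorem~\ref{approx}-analogue for failure of $s$-connectivity with error as small as $O(n^{-3})$ uniformly in $s\le 0.1\log n$. Once that is in place, the vertex-removal trick and the Palm computation are clean and elementary.
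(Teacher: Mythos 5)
Your key graph-theoretic observation (deleting a vertex $v$ of a cut of size $\le s-1$ makes $G_{k-1}(\cP\setminus\{v\})$ not $(s-1)$-connected, since $G_{k-1}(\cP\setminus\{v\})\subseteq G_k(\cP)\setminus v$) is exactly the one the paper uses. However, the route you take from there is genuinely different from the paper's, and the step you flag as ``the main technical obstacle'' is in fact a substantial gap that the paper deliberately avoids having to confront.

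The paper does \emph{not} build a local-event framework for $s$-connectivity. There is no analogue of $A_k^{(s)}$, no $s$-connectivity version of Theorem~\ref{approx}, and no Slivnyak--Mecke computation. Instead it works directly with the global process, conditioning on the number $Z=m$ of points, and exploits the elementary fact that the map $\phi_i$ deleting the $i$-th uniform point is measure-preserving from $\Prb_m$ to $\Prb_{m-1}$. It sets $\cA_s=\{G_k\text{ not $s$-conn.},\ G_{k-1}\text{ is $(s-1)$-conn.}\}$, $\cB_s=\{G_{k-1}\text{ not $(s-1)$-conn.}\}$, notes $\{S_{n,k}\text{ not $s$-conn.}\}\subseteq \cA_s\cup\cB_s$, and shows $\Prb(\cA_s)\le c\,(\log n)\Prb(\cB_s)+O(n^{-3})$ by a first-moment argument over the $m$ deletions $\phi_i$. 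Localization enters only via the geometric Lemma~\ref{l:extra}: outside a bad set $\cC$ of probability $O(n^{-3})$, when $\phi(\cP)$ has a cut, all but one separated component is small (diameter $\le c_7\sqrt{\log n}$). Since $G_{k-1}(\cP)$ is $(s-1)$-connected but $G_{k-1}(\phi(\cP))$ is not, the deleted point $X_1$ must lie within distance $c_7\sqrt{\log n}$ of such a small component, an event of measure at most $4\pi c_7^2\log n$ determined by $\cP\setminus X_1$. Multiplying by $m\sim n$ and using $\Prb(Z=m)\le 2\Prb(Z=m-1)$ gives the $O(\log n)$ factor. This is cleaner than what you propose because it only needs a deletion-stability version of the existing diameter lemmas, not a full two-sided local/global equivalence for $s$-connectivity uniformly over $s\le 0.1\log n$.

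Concretely, the gap in your proposal is the unproved $s$-connectivity analogue of Theorem~\ref{approx}: you would need both that every failure of $s$-connectivity (outside $O(n^{-3})$) is witnessed by a local cut event in some $V\in\mathcal{C}_2$, \emph{and} a matching lower bound $|\mathcal{C}_1|\Prb(A_{k-1}^{(s-1)})\lesssim \Prb(S_{n,k-1}\text{ not $(s-1)$-connected})+O(n^{-3})$. The latter inequality is only sound in the regime $|\mathcal{C}_1|\Prb(A_{k-1}^{(s-1)})=O(1)$ (for larger values $1-(1-p)^{|\mathcal{C}_1|}$ saturates), so you would also need to argue separately that the claimed bound is trivial when the local probability is large. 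None of this is impossible, but it is a noticeably heavier piece of machinery than what the lemma actually requires, and it is precisely what the paper's point-deletion argument sidesteps.
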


We shall need the following technical result to prove Lemma~\ref{l:main}.
\begin{lemma}\label{l:extra}
  Suppose $0.3\log n<k < 0.6\log n$. Then there exists $c_7$ such that
  the collection of pointsets $\cP$ from which we may delete at set $T$ of at most
  $0.1\log n$ points so that either of the following hold:
  \begin{itemize}
  \item there is any point $x\in S_n$ (not necessarily in $\cP$) with
    $\lceil 0.6\log n\rceil$-nearest neighbour radius in $\mathcal{P}\setminus S$ at least
    $c_7\sqrt{\log n}$
  \item $G_k(\cP)\setminus T$ contains at least two components of diameter at
    least $c_7\sqrt{\log n}$
  \end{itemize}
  has probability $O(n^{-3})$.
\end{lemma}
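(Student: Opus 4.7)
My plan is to handle the two bullet clauses separately and union-bound to $O(n^{-3})$.

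For the first bullet, I would use a net argument in the spirit of Lemma~\ref{no long edges in U}. Take an $O(1)$-spaced grid $\mathcal{N}\subset S_n$ of size $O(n)$ and fix $c_7$ large enough that even a corner quarter-disc of radius $c_7\sqrt{\log n}/2$ has area at least $20\log n$. For each $y\in\mathcal{N}$, the Poisson-tail estimate used in Lemma~\ref{no long edges in U} gives that the disc of radius $c_7\sqrt{\log n}/2$ about $y$ fails to contain $\lceil 0.6\log n\rceil+\lceil 0.1\log n\rceil$ points of $\cP$ with probability at most $n^{-10}$. A union bound plus a passage from net points to arbitrary $x\in S_n$ (absorbing the $O(1)$ slop into $c_7$) then yields that, with probability $1-O(n^{-9})$, every $x\in S_n$ has at least $\lceil 0.6\log n\rceil+\lceil 0.1\log n\rceil$ points of $\cP$ within distance $c_7\sqrt{\log n}$. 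Deleting any $T$ with $|T|\leq 0.1\log n$ still leaves at least $\lceil 0.6\log n\rceil$ of these in $\cP\setminus T$, so the first bullet is ruled out uniformly in~$T$.

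For the second bullet, I would apply Lemma~\ref{mcon} with $\alpha_1=0.3$, $\alpha_2=0.6$, $\beta=3$, obtaining with probability $1-O(n^{-3})$ a constant $c$ such that $G_k(\cP)$ contains no edge of length at least $c\sqrt{\log n}$ and at most one component of diameter at least $c\sqrt{\log n}$. Choose $c_7$ a sufficiently large fixed multiple of $c$. On the intersection of this Lemma~\ref{mcon} event with the density event above, suppose for contradiction that some $T$ with $|T|\leq 0.1\log n$ is such that $G_k(\cP)\setminus T$ has two components $A,B$ each of diameter at least $c_7\sqrt{\log n}$. Since $c_7>c$, both $A$ and $B$ lie in the unique giant component of $G_k(\cP)$, so $T$ must be a vertex separator of the giant component pulling $A$ and $B$ onto opposite sides.

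The main obstacle is ruling out this separator scenario. I plan a Menger-type construction: since $A$ and $B$ coexist in the same connected component of $G_k(\cP)$, joined by a path consisting only of short edges (each of length $<c\sqrt{\log n}$), there exist $a\in A$, $b\in B$ at Euclidean distance $O((\log n)^{3/2})$. Cover the segment $[a,b]$ by a chain of overlapping Euclidean discs of radius $c\sqrt{\log n}/2$. In each disc the density event supplies $\Omega(\log n)$ points of $\cP$, and the bounded-edge-length property together with the $k$-nearest-neighbour density lets one stitch these local point sets into $\Omega(\log n)$ internally disjoint short $A$--$B$ paths in $G_k(\cP)$. Any separator must hit each of these paths, forcing $|T|=\Omega(\log n)$ with a constant exceeding $0.1$, the desired contradiction. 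The fiddly parts are verifying that the local paths can be made genuinely disjoint across adjacent discs in the chain, and tuning the Menger constants via $c_7$ cleanly to beat $0.1\log n$.
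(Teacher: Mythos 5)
Your handling of the first bullet is fine: fixing $c_7$ so that with high probability every disc of radius roughly $c_7\sqrt{\log n}$ about a net point of $S_n$ contains at least $\lceil 0.6\log n\rceil + \lceil 0.1\log n\rceil$ points of $\cP$, and then observing that deletion of any $T$ with $|T|\le 0.1\log n$ cannot destroy this surplus, is precisely the right way to make the Poisson-tail estimate robust to adversarial deletion, and the probability bound is more than sufficient.

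The second bullet is where the argument breaks. First, the assertion that some $a\in A$ and $b\in B$ lie within Euclidean distance $O((\log n)^{3/2})$ is not justified. A path in $G_k(\cP)$ from $A$ to $B$ must enter $T$ at the moment it leaves $A$ and again when it arrives at $B$, but in between it may make an excursion of unbounded length through a \emph{third} component of $G_k(\cP)\setminus T$; the bound $|T|\le 0.1\log n$ limits only the number of visits to $T$, not the length of those excursions, so $a$ and $b$ could in principle be at distance $\Theta(\sqrt n)$ (picture $T$ split into two small clumps, one cutting off $A$ and the other cutting off $B$, at opposite ends of $S_n$). Second, even granting nearby $a,b$, the Menger count does not close at the required constant: to carry strictly more than $0.1\log n$ internally disjoint paths, every disc in your chain must contain more than $0.1\log n$ points of $\cP$, forcing disc area $\gtrsim 0.1\log n$ and radius $\gtrsim 0.18\sqrt{\log n}$; but then pairs of points in adjacent discs can be $\approx 0.4\sqrt{\log n}$ apart, which is no smaller than the typical $k$th nearest-neighbour radius $\sqrt{k/\pi}$ and much larger than the minimum of that radius over all $n$ Poisson points, so the claimed edges of $G_k(\cP)$ need not exist. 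You flag the tuning as ``fiddly'', but I do not see how to make the constants beat $0.1$. The paper itself does not prove the lemma from scratch: it derives it as an ``easy modification of Lemmas~2 and~6 of~\cite{MR2135151}'', which bound the nearest-neighbour radius and establish uniqueness of the large-diameter component by tessellation and union bounds, with the deletion of $T$ absorbed into the slack in $k>0.3\log n$; no Menger step appears. So your route is genuinely different, and as it stands it has the two gaps above.
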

\begin{proof}
  This is an easy modification of Lemmas~2  and~6  of~\cite{MR2135151}
\end{proof}

\begin{proof}[Proof of Theorem~\ref{l:main}]
  We can view the Poisson distribution as follows. Suppose that $X_1,
  X_2, X_3, \ldots$ is an infinite sequence of uniformly distributed
  random variables in $S_n$ and let $Z\sim {\rm Po}(n)$. Then let the
  points in $\cP$ be given by $(X_i)_{i=1}^Z$.  Let $\cP_m$ denote the
  collection of pointsets with exactly $m$ points which we give the
  conditional measure which we shall sometimes denote $\Prb_m$.  From
  this point of view it is easy to see that we have $m$ measure
  preserving maps $\phi_i$ for $1\le i\le m$ from $\cP_m$ to
  $\cP_{m-1}$ given by deleting the point $X_i$.
  We shall
  usually abbreviate $\phi_{1}$ to $\phi$.

Let $\cA_s$ denote the collection of pointsets $\cP$ for which
$G_{k}(\cP)$ is not $s$-connected but $G_{k-1}(\cP)$ is
$(s-1)$-connected.  Let $\cB_s$ denote those pointsets $\cP$ for
which $G_{k-1}(\cP)$ is not $(s-1)$-connected.  Finally let $\cC$
denote the collection of pointsets $\cP$ for which either of the conditions in
Lemma~\ref{l:extra} hold, which we shall think of as the `bad'
pointsets. By Lemma~\ref{l:extra}, $\Prb(\cC)=O(n^{-3})$.

For any pointset $\cP$ in $\cA_s$ it is clear that (at least) one of
the functions $\phi_i$ maps $\cP$ into $\cB_s$.  Indeed, since
$G_k(\cP)$ is not $s$-connected, there is a point $X_i$ which we can delete
to make the graph not $(s-1)$-connected. Since $G_{k-1}(\cP\setminus
X_i)$ is a subgraph of $G_k(\cP)\setminus X_i$ the map $\phi_i$ is one
such function. Thus $\cA_s\subseteq \bigcup_{i=1}^m
\phi_i^{-1}(\cB_s)$.

  Note that $\Prb(|Z-n|>n/2)=o(e^{-n/2})$. We have
  \begin{align*}
    \Prb(\cA_s)
    &=\sum_{m=0}^{\infty} \Prb(\cA_s|Z=m)\Prb(Z=m)\\
    &=\sum_{m=n/2}^{3n/2} \Prb(\cA_s|Z=m)\Prb(Z=m)+o(e^{-n/2}) \\
    &= \sum_{m=n/2}^{3n/2} \Prb_m(\cA_s \setminus \cC)\Prb(Z=m)+O(n^{-3})\\
    &=\sum_{m=n/2}^{3n/2} \Prb_m(\cP\in\cA_s\setminus \cC \text{ and }\exists
    i:\phi_i(\cP)\in \cB_s)\Prb(Z=m) +O(n^{-3})\\ 
    &\le\sum_{m=n/2}^{3n/2}\sum_{i=1}^m \Prb_m(\cP\in\cA_s\setminus \cC \text{ and
    }\phi_i(\cP)\in \cB_s)\Prb(Z=m)+O(n^{-3})\\  
    &=\sum_{m=n/2}^{3n/2} m \Prb_m(\cP\in\cA_s\setminus \cC \text{ and
    }\phi(\cP)\in \cB_s)\Prb(Z=m)+O(n^{-3}).    
  \end{align*}

Now consider $\Prb_m(\cP\in\cA_s \setminus \cC \text{ and }
\phi(\cP)\in\cB_s)$.  For each $\cP\in\cA_s\setminus \cC$ with
$\phi(\cP)\in \cB_s$ we see that $G_{k-1}(\cP)$ is $(s-1)$-connected
but  $G_{k-1}(\phi(\cP))$ is
not $(s-1)$-connected.  Fix a separating set $T$ of $s-1$ vertices for
$G_{k-1}(\phi(\cP))$. Since $\cP\notin \cC$ we have that all but one of the
components in the separated graph $G_{k-1}(\phi(\cP))\setminus T$ are
small: less than $c_7\sqrt{\log n}$ in diameter. Fix one such
component $C$. Since $G_{k-1}(\cP)$ is $(s-1)$-connected we see that
$G_{k-1}(\cP)\setminus T$ is connected so $X_1$ must be joined to $C$
in $G_{k-1}(\cP)$ and, hence, that $X_1$ lies within distance
$c_7\sqrt{\log n}$ of $C$. Therefore $X_1$ lies within a set of
measure less than $4\pi c_7^2\log n$ which is determined by
$\cP\setminus X_1$. This event has probability less than
$\left(\frac{4\pi c_7^2 \log n}{n}\right)$. Thus, as $\phi$ is a
measure preserving transformation from $\Prb_m$ to $\Prb_{m-1}$,

\begin{align*}
\Prb_m(\cP\in\cA_s\setminus \cC\text{ and } \phi(\cP)\in\cB_s ) 
&\le \left(\frac{4\pi c_7^2 \log n}{n}\right) \Prb_{m}(\phi(\cP)\in \cB_s)\\
&= \left(\frac{4\pi c_7^2 \log n}{n}\right) \Prb_{m-1}(\cP\in \cB_s)
\end{align*}

To complete the proof note that $\Prb(Z=m)\le 2\Prb(Z=m-1)$ for all
$m>n/2$. Thus
\begin{align*}
\Prb(A_s)&\le\sum_{m=n/2}^{3n/2} m\Prb_m(\cP\in\cA_s\setminus \cC \text{ and
    }\phi(\cP)\in \cB_s)\Prb(Z=m)+O(n^{-3})\\
    &\le \sum_{m=n/2}^{3n/2}m\left(\frac{4\pi c_7^2 \log n}{n}\right) \Prb_{m-1}(\cP\in \cB_s) 
    \Prb(Z=m)+O(n^{-3})\\
    &\le \sum_{m=n/2}^{3n/2} \left(12\pi c_7^2 \log
n\right)\Prb_{m-1}(\cP\in \cB_s)\Prb(Z=m-1)+O(n^{-3})\\   
& \le \left(12\pi c_7^2 \log
n\right)\Prb(\cB_s) +O(n^{-3}).
\end{align*}

 Finally
observe that
\[
\{\cP\ :\ S_{n,k}\text{ not $s$-connected}\}\subseteq\cA_s\cup \cB_s
\]
so that the first part of the lemma holds with $c_6=12\pi c_7^2+1$: 
\[\mathbb{P}(S_{n,k} \textrm{ not $s$-connected}) \leq c_6 \log n \mathbb{P}(S_{n,k-1} \textrm{ not $(s-1)$-connected})+ O(n^{-3}).\]
Iterating this $s-1=O(\log n)$ times we obtain the second part of our claim.
\end{proof}

We can now finally turn to the proof of Theorem~\ref{s-connected}.
\begin{proof}[Proof of Theorem~\ref{s-connected}]
By Theorem~2 of~\cite{MR2479805} we may restrict ourselves to the case
where $s(n)$ is an integer sequence with $s(n) \leq \frac{\log
  n}{2\gamma\log\log n}$. Suppose that $k=k(n)$ is such that $S_{n,k}$
is connected \whp, so that
\[\Prb(S_{n,k}\text{ is not connected})\to 0.\] 
By Theorem~\ref{maintheo} with $\eps={(c_6\log n)}^{-s}$,
\[
\Prb(S_{n,k+\lfloor C\log 1 /\eps\rfloor} \text{ is not connected})<\eps
\]
for all sufficiently large $n$. (Explicitly, this is for all $n$ with
$n>\eps^{-\gamma}$. Given our choice of $\eps$ and the restriction on
$s$, $\eps^{-\gamma}$ is at most $\exp(\frac{1}{2}\log n +O(\frac{\log
  n}{\log\log n}))$, so that this is indeed satisfiable for large
enough $n$.) Now
\begin{align*}
C \log\frac{1}{\eps} +s-1&< 2Cs \log\log n\\
\end{align*} 
for all sufficiently large $n$. If $k+\lfloor2Cs\log\log
n\rfloor<0.6\log n$, we have by Lemma~\ref{l:main}
\begin{align*}
\Prb(&S_{n,k+\lfloor2Cs\log\log n\rfloor} \text{ not
  $s$-connected})\\
  &\leq {(c_6  \log n)}^{s-1} \Prb(S_{n,k-s+1 +\lfloor 2Cs \log \log n\rfloor} \textrm{ not connected}) +O\left(n^{-3}\log n\right)\\
  & \leq {(c_6  \log n)}^{s-1} \Prb(S_{n,k +\lfloor C \log 1 / \eps \rfloor} \textrm{ not connected}) +O\left(n^{-3}\log n\right)\\
  &< (c_6  \log n)^{s-1}\eps +O\left(n^{-3}\log  n\right)\\&= O(1/\log n)=o(1)
\end{align*}
as required. If on the other hand $k+\lfloor2Cs\log\log n\rfloor\geq
0.6\log n$, we have
\[\Prb(S_{n,k+\lfloor2Cs\log\log n\rfloor } \text{ is not $s$-connected})=o(1)\]
by Theorem~2 of~\cite{MR2479805}. The result follows.
\end{proof}


\begin{thebibliography}{10}

\bibitem{MR2135151}
P.~Balister, B.~Bollob{\'a}s, A.~Sarkar, and M.~Walters.
\newblock Connectivity of random {$k$}-nearest-neighbour graphs.
\newblock {\em Adv. in Appl. Probab.}, 37(1):1--24, 2005.

\bibitem{MR2514943}
P.~Balister, B.~Bollob{\'a}s, A.~Sarkar, and M.~Walters.
\newblock A critical constant for the {$k$}-nearest-neighbour model.
\newblock {\em Adv. in Appl. Probab.}, 41(1):1--12, 2009.

\bibitem{MR2479805}
P.~Balister, B.~Bollob{\'a}s, A.~Sarkar, and M.~Walters.
\newblock Highly connected random geometric graphs.
\newblock {\em Discrete Appl. Math.}, 157(2):309--320, 2009.

\bibitem{MR0132566}
E.~N. Gilbert.
\newblock Random plane networks.
\newblock {\em J. Soc. Indust. Appl. Math.}, 9:533--543, 1961.

\bibitem{MR1965368}
J.~M. Gonz{\'a}lez-Barrios and A.~J. Quiroz.
\newblock A clustering procedure based on the comparison between the {$k$}
  nearest neighbors graph and the minimal spanning tree.
\newblock {\em Statist. Probab. Lett.}, 62(1):23--34, 2003.

\bibitem{MR1442317}
M.~D. Penrose.
\newblock The longest edge of the random minimal spanning tree.
\newblock {\em Ann. Appl. Probab.}, 7(2):340--361, 1997.

\bibitem{MR1704341}
M.~D. Penrose.
\newblock On {$k$}-connectivity for a geometric random graph.
\newblock {\em Random Structures Algorithms}, 15(2):145--164, 1999.

\bibitem{MR1986198}
M.~D. Penrose.
\newblock {\em Random geometric graphs}, volume~5 of {\em Oxford Studies in
  Probability}.
\newblock Oxford University Press, Oxford, 2003.

\bibitem{2011arXiv1101.2619W}
M.~{Walters}.
\newblock {Small components in k-nearest neighbour graphs}.
\newblock {Submitted.} {\tt arXiv:1101.2619v1 [Math.Pr]}, Jan. 2011, 

\bibitem{xue-kumar}
F.~Xue and P.~R. Kumar.
\newblock The number of neighbors needed for connectivity of wireless networks.
\newblock {\em Wireless Networks}, 10:169--181, 2004.

\end{thebibliography}

\end{document}